\newcommand{\Tableau}[2][sY]{{\text{\tableau[#1]{#2}}}}
\theoremstyle{plane}
\newtheorem{theorem}{Theorem}[section]
\newtheorem{lemma}[theorem]{Lemma}
 \newtheorem{corollary}[theorem]{Corollary}
      \newtheorem{proposition}[theorem]{Proposition}
        \newtheorem{theorem*}{Theorem}
    \newcounter{relctr} 
\everydisplay\expandafter{\the\everydisplay\setcounter{relctr}{0}} 
\NewDocumentCommand\DownArrow{O{2.0ex} O{black}}{%
   \mathrel{\tikz[baseline] \draw [<-, line width=0.5pt, #2] (0,0) -- ++(0,#1);}
}
\theoremstyle{definition}
 \newtheorem*{definition*}{Definition}
\newtheorem{definition}[theorem]{Definition}
 \newtheorem*{data availability}{Data Availability}
\theoremstyle{remark}
\newtheorem*{acknowledgments}{Acknowledgments}
\numberwithin{equation}{section}
\begin{document}
  \title[Two-pointed Prym-Brill-Noether Loci and coupled Prym-Petri theorem]{Two-pointed Prym-Brill-Noether Loci and coupled Prym-Petri theorem}
  \author{Minyoung Jeon}
\address{Department of mathematics, University of Georgia, Athens GA 30602, USA}
\email{minyoung.jeon@uga.edu}

\subjclass[2020]{Primary 14H40, 14H51, 14M15; Secondary 14H10, 14C25, 19E20} 
\keywords{Two-pointed Prym-Brill-Noether loci, coupled Prym-Petri map, vexillary degeneracy loci, K-theory classes}

   \begin{abstract}
 We establish two-pointed Prym-Brill-Noether loci with special vanishing at two points, and determine their K-theory classes when the dimensions are as expected. The classes are derived by the applications of a formula for the K-theory of certain vexillary degeneracy loci in type D. In particular, we show a two-pointed version of Prym-Petri theorem on the expected dimension in the general case, with a coupled Prym-Petri map. Our approach is inspired by the work on pointed cases by Tarasca, and we generalize unpointed cases by De Concini-Pragacz and Welters.

         \end{abstract}

   \date{\today}

   \maketitle

\section{Introduction}
Prym varieties belong to a class of principally polarized abelian varieties associated with \'etale double covers of algebraic curves. The Prym-Brill-Noether loci, which are the Brill-Noether loci for Prym varieties, were introduced by Welters \cite{Wel85}. Later, these loci were generalized to the Prym-Brill-Noether loci with special vanishing at one point by the author \cite{Jeon} and Tarasca \cite{Tar}, so-called, pointed Prym-Brill-Noether loci. The scheme structure can be understood as degeneracy loci of Lie type D. To be specific, De Concini-Pragacz \cite{DCP} regarded the Prym-Brill-Noether loci as the degeneracy loci, characterized by a rank condition on the intersection of two maximal isotropic subbundles in a vector bundle with respect to a non-degenerate quadratic form. The pointed cases \cite{Jeon, Tar} were presented as the degeneracy loci by rank conditions on the intersection of a maximal isotropic subbundle of the vector bundle and an isotropic flag associated to a fixed point.

The goal of this article is to study the Prym-Brill-Noether loci with special vanishing at two points, providing some analogous results from the pointed case in \cite{Jeon,Tar}. We not only identify the two-pointed Prym-Brill-Noether loci as being associated with a certain vexillary degeneracy loci of Lie type D in expected dimension, but also give a formula for the class of the two-pointed Prym-Brill-Noether loci in connective K-theory. Furthermore we establish that the expected dimensions of the two-pointed Prym-Brill-Noether loci hold generically.

In concrete terms, the {\it two-pointed Prym-Brill-Noether locus} is defined as follows. Let $C$ be a smooth algebraic curve of genus $g$ over an algebraically closed field $\mathbb{K}$ of characteristic different from two.
Let $\pi:\widetilde{C}\rightarrow C$ be an \'etale double cover of $C$, which is determined by a nontrivial $2$-torsion point $\epsilon$ in $\mathrm{Jac}(C)$. Let $
\mathbf{a}'=(0\leq a'_0< a'_1< \cdots<a'_r\leq 2g-2)$ and $\mathbf{b}'=(0\leq b'_0< b'_1< \cdots<b'_r\leq 2g-2)
$
be strictly increasing sequences such that 
\begin{equation}\label{con}
\min\{a'_{i+1}-a'_i,b'_{i+1}-b'_i\}=1
\end{equation} for $i=0,\ldots, r-2$. For points $P$ and $Q$ in $\widetilde{C}$, the locus $V_{\mathbf{a}',\mathbf{b}'}^r(C,\epsilon,P,Q)$ in a Prym variety $\mathscr{P}^+$ for odd $r$ (or $\mathscr{P}^-$ for even $r$) is defined to be 
\begin{align*}
 V_{\mathbf{a}',\mathbf{b}'}^r(C,\epsilon,P,Q):=\left\{L\in \right.& \mathscr{P}^\pm\;|\;h^0(\widetilde{C},L)\equiv r+1\;\text{(mod 2)},\\
&\left. h^0(\widetilde{C}, L(-b'_{j}Q'-a'_{i}P')))\geq r+1-j-i\;\text{for all $i,j$}\right\}.\nonumber
\end{align*}
Indeed, set-theoretically, we can recover Prym-Brill-Noether loci from Welters \cite{Wel85} by taking $\mathbf{a}'=(0,\ldots,r)$ and $\mathbf{b}'=(0,\ldots, r)$, and one may forget either a pair $(P,\mathbf{a})$ or $(Q,\mathbf{b}')$ with the condition \eqref{con} to obtain the pointed Prym-Brill-Noether loci in \cite[\S 4.1]{Jeon}, \cite[\S 2]{Tar}. From an analysis of degeneracy loci of type D in \S\ref{sec2}, the conditions specified by $\mathbf{a}'$ and $\mathbf{b}'$ for $V_{\mathbf{a}',\mathbf{b}'}^r(C,\epsilon,P,Q)$ are equivalent to the conditions associated to modified sequences $\mathbf{a}$ and $\mathbf{b}$ for $V_{\mathbf{a},\mathbf{b}}^r(C,\epsilon,P,Q)$ defined by 
\[
 h^0(\widetilde{C},L(-b_iQ-a_iP))\geq r+1-i\quad\text{ for $i=0,\ldots,r$}
 \]
in $\mathscr{P}^\pm$. This enables us to impose the scheme structure of a vexillary Degeneracy loci of type D on the two-pointed Prym-Brill-Noether loci, generalizing the case $\mathbf{a}'=(0,\ldots,r)$ and $\mathbf{b}'=(0,\ldots, r)$ in \cite{DCP}. When the two points $P$ and $Q$ come together, the locus $V_{\mathbf{a},\mathbf{b}}^r(C,\epsilon,P,Q)$ specializes to a pointed Prym-Brill-Noether locus associated to a strictly increasing sequence $a_i+b_i$ for $i=0,\ldots,r$, and it is endowed with the scheme structure in \cite{Jeon,Tar}.

 The connective K-theory introduced by Cai \cite{Cai} for schemes establishes a connection between the Chow groups and Quillen's K-theory groups. Dai and Levine \cite{DL} explored this notion within the realm of motivic homotopy theory. In our work, we employ a simpler variant of the connective K-theory for the scheme: the connective K-cohomology $CK^*(X)$ for an irreducible variety $X$. See \cite{And19,HIMN,HIMN20} especially for more details in the context in degeneracy loci.

 When $V_{\mathbf{a},\mathbf{b}}^r(C,\epsilon,P,Q)$ has the expected dimension $g-1-|\mathbf{a}+\mathbf{b}|$, the K-theory class formulas for $V_{\mathbf{a},\mathbf{b}}^r(C,\epsilon,P,Q)$ in connective K-theory are stated as follows. Let $\lambda$ be a partition such that 
 \[
 \lambda_{i}=a_{r+1-i}+b_{r+1-i},
 \]
and $\ell_\circ:=\ell(\lambda)$ be the length of $\lambda$, the number of non-zero parts of $\lambda$.

\begin{theorem}[= Theorem \ref{thm:ck}]\label{main}
The dimension of $V_{\mathbf{a},\mathbf{b}}^r(C,\epsilon,P,Q)$ is at least $g-1-|\mathbf{a}+\mathbf{b}|$. If $V_{\mathbf{a},\mathbf{b}}^r(C,\epsilon,P,Q)$ has dimension of $g-1-|\mathbf{a}+\mathbf{b}|$, then it is Cohen-Macaulay, and 
\begin{align*}
\left[V_{\mathbf{a},\mathbf{b}}^r(C,\epsilon,P,Q)\right]&=Pf_\lambda(d(1),\ldots,d(\ell_\circ);\beta)
\end{align*}
in $CK^*(\mathscr{P}^\pm)[1/2]$. 
\end{theorem}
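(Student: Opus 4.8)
The plan is to identify $V_{\mathbf{a},\mathbf{b}}^r(C,\epsilon,P,Q)$ with a vexillary degeneracy locus of Lie type D and then to apply the connective K-theory formula for such loci established in Section~\ref{sec2}. Following De Concini--Pragacz \cite{DCP} and the pointed constructions of \cite{Jeon,Tar}, I would start from a Poincar\'e line bundle on $\mathscr{P}^\pm\times\widetilde{C}$ and push it forward, after a suitable twist, to a vector bundle $E$ on $\mathscr{P}^\pm$ carrying a nondegenerate quadratic form. The subspace $H^0(\widetilde{C},L)$ varies in a maximal isotropic subbundle, while the filtration by the groups $H^0(\widetilde{C},L(-a_iP-b_iQ))$ assembles into an isotropic flag coupling the two points $P$ and $Q$. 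The parity condition $h^0\equiv r+1\pmod 2$ selects one of the two connected families of maximal isotropic subbundles, which is exactly why the locus sits in $\mathscr{P}^+$ for odd $r$ and in $\mathscr{P}^-$ for even $r$. The reduction from $(\mathbf{a}',\mathbf{b}')$ to $(\mathbf{a},\mathbf{b})$ recorded in the introduction, together with \eqref{con}, is what forces the rank conditions into a \emph{vexillary} pattern of expected codimension $|\mathbf{a}+\mathbf{b}|$.

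Granting this identification, the lower bound is immediate from the standard dimension theory of degeneracy loci: a locus cut out by rank conditions on maps of bundles over the $(g-1)$-dimensional variety $\mathscr{P}^\pm$ has every component of dimension at least $g-1-|\mathbf{a}+\mathbf{b}|$.

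Suppose now that the locus attains the expected dimension $g-1-|\mathbf{a}+\mathbf{b}|$. Then its defining rank conditions are locally cut out by a regular sequence, so the locus is Cohen--Macaulay and meets the universal vexillary locus properly. Its fundamental class is therefore the pullback of the universal class, which in connective K-theory is the type-D Pfaffian $Pf_\lambda(d(1),\ldots,d(\ell_\circ);\beta)$ of Section~\ref{sec2}. Here the partition $\lambda$ with $\lambda_i=a_{r+1-i}+b_{r+1-i}$ is the shape dictated by the coupled conditions, the entries $d(1),\ldots,d(\ell_\circ)$ are the requisite connective Chern classes of the relevant quotient bundle, and the inversion of $2$ in $CK^*(\mathscr{P}^\pm)[1/2]$ absorbs the division by $2$ intrinsic to the type-D Pfaffian and to the two families of maximal isotropics. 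Since the locus is Cohen--Macaulay of the expected codimension, no excess-intersection correction is needed and the class equals this single Pfaffian.

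The main obstacle is the first step: verifying that the two-point vanishing conditions genuinely produce a vexillary degeneracy locus of type D, and matching the geometric data---the bundle $E$, its quadratic form, and the coupled isotropic flag---precisely with the combinatorial inputs of the Pfaffian formula. In particular, one must confirm that \eqref{con} is exactly the hypothesis guaranteeing vexillariness, so that a single Pfaffian $Pf_\lambda$, rather than a sum over several shapes, computes the class.
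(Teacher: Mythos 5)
Your overall route is the same as the paper's: realize $V_{\mathbf{a},\mathbf{b}}^r(C,\epsilon,P,Q)$ as a vexillary degeneracy locus of type D and invoke the connective K-theoretic Pfaffian formula (which, note, comes from \cite{And19}*{Theorem 4}, not from \S\ref{sec2} of this paper; \S\ref{sec2} is purely combinatorial). But your proposal defers exactly the step that constitutes the proof, and misdescribes the geometry in a way that matters. In the paper, the two marked points produce \emph{two separate isotropic flags} inside the rank-$2n$ quadratic bundle $\mathcal{V}$: the subbundles $\mathcal{W}_{a_i'}=(\nu_1)_*(\mathcal{L}(\widetilde{D}-a_i'P))|_{\mathscr{P}^\pm}$, recording vanishing at $P$ inside sections of the auxiliary twist, and $\mathcal{U}_{b_j'}=(\nu_1)_*(\mathcal{L}(-b_j'Q)/\mathcal{L}(-\widetilde{D}))|_{\mathscr{P}^\pm}$, recording vanishing at $Q$ in the local-sections quotient; Mumford's construction identifies $H^0(\widetilde{C},L(-b_j'Q-a_i'P))$ with the fiberwise intersection $\mathcal{W}_{a_i'}\cap\mathcal{U}_{b_j'}$, so the locus is cut out by the \emph{doubly indexed} conditions $\dim(\mathcal{W}_{a_i'}\cap\mathcal{U}_{b_j'})\geq r+1-i-j$. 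Your picture --- a maximal isotropic subbundle in which $H^0(\widetilde{C},L)$ varies, together with a single isotropic flag ``assembled'' from the spaces $H^0(\widetilde{C},L(-a_iP-b_iQ))$ --- is not correct as stated: those cohomology groups have jumping dimensions and cannot form a flag of subbundles; the flags must be built from twists of the Poincar\'e bundle, and with two points one unavoidably faces a two-flag rank problem. The content of \S\ref{sec2} --- the triple $\tau(\mathbf{a}',\mathbf{b}')$, the lemma that $w(\tau)$ is the unique Bruhat-minimal signed permutation with the prescribed rank conditions, and the resulting equality $V_{\mathbf{p},\mathbf{q}}=V_{\mathbf{p}',\mathbf{q}'}$ --- is precisely what collapses the doubly indexed conditions to the diagonal ones under hypothesis \eqref{con} and exhibits the locus as a vexillary Schubert-type locus with shape $\lambda_i=a_{r+1-i}+b_{r+1-i}$. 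Since you explicitly set this aside as ``the main obstacle,'' your proposal omits the heart of the argument rather than offering an alternative to it.

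Two further points. Your justification of Cohen-Macaulayness --- that the rank conditions are ``locally cut out by a regular sequence'' --- is false in general: degeneracy loci of codimension greater than one are almost never complete intersections. The paper instead deduces CM-ness, once the codimension equals $|\mathbf{a}+\mathbf{b}|$, from Ramanathan's theorem that Schubert varieties are arithmetically Cohen-Macaulay \cite{Ra}, transported to the degeneracy locus via the local Schubert model (cf. \cite{DCP}, Proposition 2(2)). Second, to obtain the class in the stated form $Pf_\lambda(d(1),\ldots,d(\ell_\circ);\beta)$, whose entries involve only classes attached to the $\mathcal{W}$-flag, one needs the additional input that the bundles $\mathcal{U}_{b_i'}$ have trivial Chern classes for $\widetilde{D}$ sufficiently positive (\cite{DCP}, Lemma 5.1); without this, the formula of \cite{And19} involves the Chern classes of both flags. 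Neither issue derails the strategy, but both must be repaired before your outline becomes a proof.
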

The above formula is expressed by a Pfaffian of a skew-symmetric matrix associated to K-theoretic Chern classes, as derived from the K-theory class formulas for vexillary degeneracy loci of type D in \cite[Theorem 4]{And19}. Unspecified notations in the theorem will be defined later in \S\ref{sec3}. As a corollary, we have the following about the class of $V_{\mathbf{a},\mathbf{b}}^r(C,\epsilon,P,Q)$ in the numerical equivalence ring $N^*(\mathscr{P}^\pm,\mathbb{K})$ or the singular cohomology ring $H^*(\mathscr{P}^\pm,\mathbb{C})$. Let $\xi$ be the class of theta divisor on the Prym variety $\mathscr{P}^\pm$ in $N^*(\mathscr{P}^\pm,\mathbb{K})$ or in $H^*(\mathscr{P}^\pm,\mathbb{C})$.

\begin{corollary}\label{cor33}
If $\mathrm{dim}(V_{\mathbf{a},\mathbf{b}}^r(C,\epsilon,P,Q))= g-1-|\mathbf{a}+\mathbf{b}|$, then $V_{\mathbf{a},\mathbf{b}}^r(C,\epsilon,P,Q)$ has the class
\begin{equation}\label{class}
\left[V_{\mathbf{a},\mathbf{b}}^r(C,\epsilon,P,Q)\right]=\dfrac{1}{2^{\ell_\circ}}\prod_{i=0}^{r}\dfrac{1}{{(a_i+b_i)!}}\prod_{j<i}\dfrac{a_i+b_i-a_j-b_j}{a_i+b_i+a_j+b_j}\cdot (2\xi)^{|\mathbf{a}+\mathbf{b}|}
\end{equation}
in $N^*(\mathscr{P}^\pm,\mathbb{K})$ or $H^*(\mathscr{P}^\pm,\mathbb{C})$.
\end{corollary}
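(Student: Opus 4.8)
The plan is to obtain the numerical (equivalently singular cohomology) class by specializing the connective K-theory formula of Theorem~\ref{main} to the Chow ring and then evaluating the resulting Pfaffian on the Prym variety. Recall that $CK^*(X)$ is a graded algebra over $\mathbb{Z}[\beta]$ and that the projection $CK^*(X)\twoheadrightarrow CK^*(X)/(\beta)\cong CH^*(X)$ sends each class to its lowest-degree (leading) homogeneous component. Because $V_{\mathbf{a},\mathbf{b}}^r(C,\epsilon,P,Q)$ is assumed to have the expected codimension $|\mathbf{a}+\mathbf{b}|$ in $\mathscr{P}^\pm$, its connective class has its leading term in $CH^{|\mathbf{a}+\mathbf{b}|}(\mathscr{P}^\pm)$, and that term is exactly the cycle class $[V_{\mathbf{a},\mathbf{b}}^r(C,\epsilon,P,Q)]$. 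First I would apply $\beta\mapsto 0$ to both sides of the identity in Theorem~\ref{main}. Under this projection the K-theoretic Pfaffian $Pf_\lambda(d(1),\ldots,d(\ell_\circ);\beta)$ reduces to a Schur $P$-type Pfaffian in the ordinary Chern classes $c(i)$ to which the K-theoretic classes $d(i)$ degenerate when $\beta\mapsto 0$; this is the type D Pfaffian class formula of De Concini--Pragacz type, now read in $N^*(\mathscr{P}^\pm,\mathbb{K})$ or $H^*(\mathscr{P}^\pm,\mathbb{C})$.

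Next I would compute these Chern classes on the Prym variety. The bundle underlying the degeneracy locus presentation of \S\ref{sec2} restricts on $\mathscr{P}^\pm$ to a bundle whose Chern character is, by the computations of Welters \cite{Wel85} and De Concini--Pragacz \cite{DCP}, governed entirely by the theta divisor: its total Chern class is the exponential of a multiple of $\xi$, so that the $i$-th Chern class becomes a rational multiple of $\xi^i$, carrying the factor of $2$ characteristic of the \'etale double cover. Substituting these one-variable (``Poincar\'e-exponential'') Chern classes into the Schur $P$-type Pfaffian collapses it, via the classical principal-specialization identity for Schur $P$- and $Q$-functions, to a single monomial in $\xi$ of degree $|\lambda|=|\mathbf{a}+\mathbf{b}|$ times a product of the shape $\prod_i \frac{1}{\lambda_i!}\prod_{j<i}\frac{\lambda_i-\lambda_j}{\lambda_i+\lambda_j}$. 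Re-indexing by $\lambda_i=a_{r+1-i}+b_{r+1-i}$ turns this product into $\prod_{i=0}^r \frac{1}{(a_i+b_i)!}\prod_{j<i}\frac{a_i+b_i-a_j-b_j}{a_i+b_i+a_j+b_j}$, and collecting the powers of $2$ (those from the normalization $Q_\lambda=2^{\ell(\lambda)}P_\lambda$ against those from writing everything in terms of $2\xi$) produces the prefactor $\frac{1}{2^{\ell_\circ}}$ and the power $(2\xi)^{|\mathbf{a}+\mathbf{b}|}$, which is \eqref{class}.

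The main obstacle is this last step: tracking the Pfaffian specialization together with the exact powers of $2$. Two points need care. First, one must pin down precisely which bundle's Chern classes appear and the exact multiple of $\xi$ occurring in its Chern character on $\mathscr{P}^\pm$, since the factor of $2$ there is what balances the $2^{\ell_\circ}$ prefactor against $(2\xi)^{|\mathbf{a}+\mathbf{b}|}$. Second, one must invoke the correct evaluation of the type D Pfaffian at the Poincar\'e-exponential specialization and confirm that it yields the displayed product over $i$ and over $j<i$. I would verify the overall normalization by specializing to the unpointed case $\mathbf{a}'=\mathbf{b}'=(0,\dots,r)$, where \eqref{class} must recover the De Concini--Pragacz and Welters class, and to the one-pointed case, where it must recover the class of Jeon \cite{Jeon} and Tarasca \cite{Tar}.
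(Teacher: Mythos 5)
Your proposal is correct and follows essentially the same route as the paper: the paper obtains the corollary precisely by setting $\beta=0$ in Theorem \ref{thm:ck} and invoking \cite[Lemma 5.2]{DCP}, which gives $c(\mathcal{W}_i^\vee)=e^{2\xi}$ for $\widetilde{D}\gg 0$ independently of $i$, so that the Pfaffian collapses to the stated product in $(2\xi)$. Your extra bookkeeping (the Schur $P$/$Q$ principal-specialization identity and the consistency checks against the unpointed case of \cite{DCP} and the one-pointed case of \cite{Jeon,Tar}) only makes explicit what the paper leaves implicit.
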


In particular, the right hand side of \eqref{class} is supported on $V_{\mathbf{a},\mathbf{b}}^r(C,\epsilon,P,Q)$ even if the dimension of $V_{\mathbf{a},\mathbf{b}}^r(C,\epsilon,P,Q)$ is greater than $g-1-|\mathbf{a}+\mathbf{b}|$. Moreover, with $\xi$ being ample, the right hand side of \eqref{class} becomes non-zero whenever $g-1-|\mathbf{a}+\mathbf{b}|\geq 0$. This implies that if $g-1\geq|\mathbf{a}+\mathbf{b}|$, the two-pointed Prym-Brill-Noether locus $V_{\mathbf{a},\mathbf{b}}^r(C,\epsilon,P,Q)$ is not empty and has dimension at least $g-1-|\mathbf{a}+\mathbf{b}|$, for any such a pair $(C,\epsilon,P,Q)$. Corollary \ref{cor33} recovers the results in \cite[Theorem 9]{DCP} with $\mathbf{a}'=(0,1,\ldots,r)$ and $\mathbf{b}'=(0,1,\ldots,r)$, and a two-pointed version analogous to \cite[Theorem 1]{Tar}.
The subsequent theorem confirms that the expected dimension of $V_{\mathbf{a},\mathbf{b}}^r(C,\epsilon,P,Q)$ is achieved generically.

\begin{theorem}\label{main:2}
Let $C$ be a general curve of genus $g$, with its \'etale double cover $\widetilde{C}$ associated to $\epsilon$, an arbitrary non-trivial 2-torsion point in $\mathrm{Jac}(C)$. Let $P$ and $Q$ be general points in $\widetilde{C}$. Then $V_{\mathbf{a},\mathbf{b}}^r(C,\epsilon,P,Q)$ is either empty or has dimension $g-1-|\mathbf{a}|-|\mathbf{b}|$ at $L\in V_{\mathbf{a},\mathbf{b}}^r(C,\epsilon,P,Q)$ such that  $h^0(\widetilde{C},L(-a_iP-b_iQ))= r+1-i\;\text{for all $i=0,\ldots,r$}$.
\end{theorem}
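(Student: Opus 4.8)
The plan is to convert the dimension statement into the injectivity of a coupled Prym-Petri map and then to verify that injectivity at a general parameter by specialization. Since Theorem \ref{main} already supplies the bound $\dim_L V_{\mathbf a,\mathbf b}^r \ge g-1-|\mathbf a+\mathbf b|$, it suffices to prove the reverse inequality on the Zariski tangent space at a point $L$ with $h^0(\widetilde C, L(-a_iP-b_iQ))=r+1-i$ for every $i$. At such an $L$ all defining inequalities are equalities, so the infinitesimal analysis of the type D vexillary degeneracy locus from \S\ref{sec2} identifies $T_L V_{\mathbf a,\mathbf b}^r$ with the annihilator, inside $T_L\mathscr P^\pm\cong H^1(\widetilde C,\mathcal O_{\widetilde C})^-$, of the image of a coupled Prym-Petri map $\mu=\mu_{L,P,Q}$. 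Hence $\dim T_L V_{\mathbf a,\mathbf b}^r=g-1-|\mathbf a+\mathbf b|$ exactly when $\mu$ is injective, in which case $L$ is a smooth point of a component of the expected dimension.

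The map $\mu$ is built from the Prym relation $\sigma^*L\cong K_{\widetilde C}\otimes L^{-1}$, where $\sigma$ is the involution of $\widetilde C$ over $C$. This relation identifies $H^0(\widetilde C,K_{\widetilde C}\otimes L^{-1})$ with $H^0(\widetilde C,L)$ and turns Petri multiplication into the pairing $H^0(\widetilde C,L)\otimes H^0(\widetilde C,L)\to H^0(\widetilde C,K_{\widetilde C})$, $s\otimes t\mapsto s\cdot\sigma^*t$; composing with the projection onto the anti-invariant part $H^0(\widetilde C,K_{\widetilde C})^-\cong H^0(C,K_C\otimes\epsilon)$ and antisymmetrizing recovers Welters' skew form \cite{Wel85}. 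To couple the two points I would assemble the graded pieces of the flags $H^0(\widetilde C,L(-a_iP-b_iQ))$ and record the vanishing of $s\cdot\sigma^*t$ along $P,\sigma(P),Q,\sigma(Q)$, exactly paralleling the one-pointed construction of Tarasca \cite{Tar} with the single sequence replaced by the pair $(\mathbf a,\mathbf b)$. The construction is arranged so that, at a point $L$ with exact cohomology, the source of $\mu$ has dimension equal to the expected codimension $|\mathbf a+\mathbf b|$, so that injectivity of $\mu$ is precisely the condition that $L$ be an expected-dimension point.

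To prove that $\mu$ is injective for a general $(C,\epsilon,P,Q)$ I would use that injectivity is an open condition over the irreducible parameter space of such quadruples, so it is enough to exhibit one quadruple with $P\ne Q$ at which $\mu$ is injective. I would produce it by the coincidence degeneration flagged in the introduction: as $Q\to P$ the locus $V_{\mathbf a,\mathbf b}^r(C,\epsilon,P,Q)$ specializes to the pointed locus for the single strictly increasing sequence $\mathbf c=(a_i+b_i)_i$, and $\mu$ degenerates to the pointed Prym-Petri map for $\mathbf c$. By Tarasca's pointed Prym-Petri theorem \cite{Tar}, extending Welters \cite{Wel85}, the latter map is injective for a general $(C,\epsilon,P)$; taking such a triple and then $Q$ near $P$, the coupled map is a perturbation of an injective map and so remains injective. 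This furnishes the required quadruple with distinct points, and openness over the irreducible base then gives injectivity at the general quadruple.

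The main obstacle is the transition across the coincidence $Q=P$: the dimensions of the spaces $H^0(\widetilde C,L(-a_iP-b_iQ))$ can jump as $Q\to P$, so $\mu$ is not literally a morphism of vector bundles across that boundary and the word \emph{perturbation} must be justified. I would resolve this by working over a one-parameter family degenerating $Q$ to $P$ and using a limit-linear-series description to show that, at the points $L$ with exact cohomology, the source and target of $\mu$ fit into flat sheaves whose special fibre realizes the pointed map, so that semicontinuity of rank applies. A robust alternative, bypassing the coincidence entirely, is to degenerate $C$ directly to a chain carrying the allowable double cover, place $P$ and $Q$ on distinct components, and verify injectivity of the limiting skew form by the combinatorial bookkeeping of the Prym-Petri degeneration; there the difficulty shifts to preserving the involution $\sigma$ and the eigenspace splitting $H^0(K_{\widetilde C})=H^0(K_{\widetilde C})^+\oplus H^0(K_{\widetilde C})^-$ in the limit while tracking both vanishing sequences. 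Either way the essential content is the Prym-Petri nondegeneracy, now with the two-pointed coupling pushed through the limit.
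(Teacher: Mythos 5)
Your reduction step is the same as the paper's: \S\ref{sec4} (Lemma \ref{lem:tan}, Propositions \ref{prop:4.2} and \ref{prop:4.4}) identifies the tangent space with the annihilator of the image of the coupled Prym--Petri map, and \S\ref{sec5} likewise invokes irreducibility of the moduli of quadruples plus openness of the coupled Prym--Petri condition to reduce Theorem \ref{main2} to exhibiting a single quadruple satisfying it. The divergence, and the genuine gap, is in how you produce that quadruple.

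The collision argument (let $Q\to P$, invoke Tarasca's pointed theorem for the sequence $(a_i+b_i)_i$, and ``perturb'') does not close, for a reason your flat-sheaf fix does not address. The coupled Prym--Petri condition quantifies over \emph{all} $L$ with exact vanishing $h^0(\widetilde C,L(-a_iP-b_iQ))=r+1-i$, and this exact-vanishing locus $V^\circ$ is only open in the proper locus $V_{\mathbf a,\mathbf b}^r$. Semicontinuity of rank makes the bad locus of pairs $(L,t)$ (those with $\ker\overline\mu\neq 0$) closed inside the exact-vanishing locus of the total space of your one-parameter family; to conclude that fibers near $t_0=\{Q=P\}$ are good, you need the \emph{image} of the bad locus in $T$ to be closed, i.e.\ properness over $T$. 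But a family of bad pairs $(L_t,t)$ with $t\to t_0$ can limit to a line bundle $L_0$ whose $h^0$ jumps; there the sections $\sigma_i$ and the quotients defining the source of $\overline\mu$ degenerate, the pointed Petri map for $(a_i+b_i)_i$ is not defined by the same recipe, and Tarasca's theorem --- which itself only speaks about exact-vanishing points --- gives no contradiction. So injectivity on the special fiber does not propagate; your flat-sheaf/semicontinuity remedy handles the degeneration of the map along a \emph{given} section $t\mapsto L_t$ extending across $t_0$ with exact vanishing, not the quantifier over $L$. This non-properness is exactly why the paper does not collide points on a smooth curve: it degenerates the curve itself to a chain of elliptic and rational curves (Fig.~\ref{fig:1}) with both $P_0$ and $Q_0$ on one rational component, and controls the jumping limits by the limit-linear-series twists $\mathscr L^i_{\widetilde E_g}$, $\mathscr L_{E_k'}$ (Lemmas \ref{lem:ord}, \ref{lem:ine}, \ref{lem:M}), tracking a putative kernel element $\rho$ through the degeneration and killing it by vanishing-order estimates on $\widetilde E_g$. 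That twisting mechanism has no analogue for colliding points on a fixed smooth curve unless you bubble off a rational tail and redo the entire limit-linear-series analysis compatibly with the involution --- at which point it is no longer a perturbation argument but a variant of the paper's degeneration. Your ``robust alternative'' in the final paragraph is indeed the paper's actual route (though the paper places $P_0,Q_0$ on the \emph{same} rational component, not on distinct ones), but as stated it defers precisely the involution/eigenspace and double-vanishing bookkeeping that constitutes the proof, so it is a restatement of the problem rather than an argument.
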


We obtain the main theorem presented above using arguments similar to \cite{EH83,Tar,Wel85}, which are used in the proof of Gieseker-Petri type theorem. Theorem \ref{main:2} encompasses the result from \cite{Wel85} concerning the Prym-Brill-Noether loci. 
Consequently, we derive conditions for the non-emptiness of the pointed Prym-Brill-Noether loci $V_{\mathbf{a},\mathbf{b}}^r(C,\epsilon,P,Q)$, as follows.

\begin{corollary}\label{cor}
Let $C$ be a general curve with its genus equals to $g$. Let $\epsilon\in\mathrm{Jac}(C)$ be an arbitrary $2$-torsion point, and $P,Q\in\widetilde{C}$ general points. Then $V_{\mathbf{a},\mathbf{b}}^r(C,\epsilon,P,Q)\neq\emptyset$ if and only if $g-1\geq |\mathbf{a}|+|\mathbf{b}|$. 

In particular, if $V_{\mathbf{a},\mathbf{b}}^r(C,\epsilon,P,Q)$ is nonempty, the dimension of $V_{\mathbf{a},\mathbf{b}}^r(C,\epsilon,P,Q)$ is $g-1-|\mathbf{a}|-|\mathbf{b}|$ and its class is equal to the class \eqref{class} in $N^*(\mathscr{P}^\pm,\mathbb{K})$ and $H^*(\mathscr{P}^\pm,\mathbb{C})$, where the characteristic of $\mathbb{K}$ is not equal to $2$.
\end{corollary}

Lastly, our study focuses on the two-pointed Prym-Brill-Noether loci under the condition \eqref{con}, which allows us to establish Theorem \ref{main:2}. In light of constraint \eqref{con}, we take a partial step toward expanding the results on pointed Prym-Brill-Noether loci by the author \cite{Jeon} and Tarasca \cite{Tar} to the two-pointed cases. So, it would be interesting to investigate further on the two-pointed Prym-Brill-Noether loci without \eqref{con}, as a complete generalization of pointed Prym-Brill-Noether loci. Even more, the study of any motivic version of Prym-Brill-Noether loci with special vanishing at up to two fixed points could be of considerable interest. 

The organization of this manuscript is outlined in the following manner. In \S \ref{sec2} we discuss the degeneracy loci of Lie type D associated to two strictly increasing sequences $\mathbf{a}'$ and $\mathbf{b}'$ satisfying \eqref{con} in general, and show the connection to the degeneracy loci associated to the new sequences $\mathbf{a}$ and $\mathbf{b}$. In \S\ref{sec3} we define the two-pointed Prym-Brill-Noether loci and establish formulas for the K-theory class of the loci in connective K-theory. In \S\ref{sec4} we determine a coupled Prym-Petri map, and \S\ref{sec5} contains the proof of the coupled version of the Prym-Petri Theorem, showing Theorem \ref{main:2}.

\begin{acknowledgments}
The author expresses gratitude to David Anderson for suggesting the consideration of an analogy to Brill-Noether loci, specifically concerning the vanishing properties at two points for Prym varieties. Additionally, the author acknowledges the partial support received through the American Mathematical Society (AMS)-Simons travel grant.
\end{acknowledgments}

\section{The Degeneracy loci}\label{sec2}
We consider sequences of integers 
$
\mathbf{p}'=(0\leq p'_0<p'_1<\cdots<p'_s)\;\text{and}\;
\mathbf{q}'=(0\leq q'_0<q'_1<\cdots<q'_s),
$
satisfying 
\[
\min\{p'_{i+1}-p'_i,q'_{i+1}-q'_i\}=1
\]
 for $i=0,\ldots, s-2$. 

Let $V$ be a vector bundle of rank $2n$ over a variety $X$, equipped with a non-degenerate quadratic form. We consider flags of isotropic subbundles  
\begin{align*}
E_{p'_s}\subset E_{p'_{s-1}}\subset\cdots\subset E_{p'_0}\subset V\quad\text{and}\quad
F_{q'_s}\subset F_{q'_{s-1}}\subset\cdots\subset F_{q'_0}\subset V
\end{align*} 
on $X$, where the rank of $E_{p'}$ is $n-p'$ and $F_{q'}$ is $n-q'$. 
The locus $V_{\bold{p}',\bold{q}'}$ associated to the sequences $\mathbf{p}'$ and $\mathbf{q}'$ is given by 
\begin{equation}\label{eqn:rk}
\mathrm{dim}(E_{p'_i}\cap F_{q'_j})\geq s+1-j-i\quad\text{for all}\; i,j.
\end{equation}
This degeneracy locus (and any locus defined by such conditions in this manuscript) must be read by the closure of the locus where equality holds. We further impose an additional condition either $\mathrm{dim}(E_0\cap F_0)\equiv 0$ (mod $2$) or $\mathrm{dim}(E_0\cap F_0)\equiv 1$ (mod $2$) on $V_{\mathbf{p}',\mathbf{q}'}$. It is worthwhile noting that the conditions for $i+j\leq s$ are enough to define the locus, since the remaining ones become trivial. 

Let us analyze the scheme structure of $V_{\mathbf{p}',\mathbf{q}'}$ with some combinatorics and notions about Schubert varieties of Lie type D in \cite[\S 2]{AIJK}. We focus on the locus with the imposed condition $\mathrm{dim}(E_0\cap F_0)\equiv 0$ (mod $2$), but the analysis of the scheme structure with $\mathrm{dim}(E_0\cap F_0)\equiv 1$ (mod $2$) is similar. 

Let $G$ be the special orthogonal group $SO(V)=SO_{2n}$ in dimension $2n$. We denote $W_n^+$ as the set of signed permutations with even number of sign changes in $W_n=S_n\ltimes\{\pm1\}^n$. Here $S_n$ is the symmetric group. Often, $W_n$ is considered as the subgroup of $S_{2n}$ such that $w(\overline{i})=\overline{w(i)}$ for $w\in W_n$. Note that $\overline{a}$ indicates $-a$. 
For a signed permutation $v$ in $W_n^+$, one can define a rank function by
\[
r_v(a,\overline{b})=\#\{s<\overline{b}\;|\;v(s)>a\}
\] 
for $a\in\{0,\ldots, n-1\}$ and $b\in\{\overline{n-1},\ldots,\overline{0},0,\ldots n-1\}$, as in \cite[p. 10]{AIJK}. 

Let $Fl$ be an isotropic flag variety parametrizing flags 
\[
0\subset E_{n-1}\subset \cdots\subset E_1\subset E_0\subset V.
\]
of isotropic subbundles defined on $X$ with respect to the quadratic form.
Schubert varieties in the isotropic flag variety (or a degeneracy locus on a variety with isotropic bundles) are defined by
\begin{equation}\label{eqn:list}
\mathrm{dim}(E_b\cap F_a)\geq r_v(a,\overline{b}),
\end{equation}
for all $a$ and $0\leq b\leq n-1$. 
The rank function is useful to determine the Bruhat order on signed permutations. That is, for $u,v\in W_n^+$, we have $u\leq v$ in Bruhat order if and only if $r_u(a,\overline{b})\leq r_v(a,\overline{b})$ for all $a$ and $0\leq b\leq n-1$. 

One can minimize the list of conditions \eqref{eqn:list}, as some of them are redundant. Especially, we have a set of triples $S=\{(a_i,b_i, k_i)\}_i$ where the set of signed permutations $u$ with $r_u(a,\overline{b})\geq k_i$ has a unique minimum signed permutation $v$ in Bruhat order. This enables us to define the Schubert varieties associated with $v$ by
\[
\mathrm{dim}(E_{a_i}\cap F_{b_i})\geq k_i
\] 
for $(a_i,b_i,k_i)\in S$. 

 One example of this set is the essential set by Fulton \cite[Lemma 3.10]{Ful92}. In our situation, we choose a reduced list of conditions as follows. Given any $\mathbf{p}'=(p'_0,\ldots,p'_s)$ and $\mathbf{q}'=(q'_0,\ldots,q'_s)$, we define a triple $\tau:=\tau(\mathbf{p}',\mathbf{q}')$ of three sequences $\mathbf{p}=(p_0,\ldots,p_s)$, $\mathbf{q}=(q_0,\ldots,q_s)$ and $\mathbf{k}=(k_0,\ldots,k_s)$ of even length $s+1$ by 
 \begin{align}
\label{a}p_{2i}&=p'_i, &q_{2i}&=q'_i,\quad&\text{and}&\;&k_{2i}&=s+1-2i \\
\label{b} p_{2i+1}&=p'_i, &q_{2i+1}&=q'_{i+1}, \quad&\text{and}&\;&k_{2i+1}&=s+1-2i-1&\quad\text{if}\; p'_{i+1}-p'_i=1,\\
\label{c}p_{2i+1}&=p'_{i+1}, &q_{2i+1}&=q'_i, \quad&\text{and}&\;&k_{2i+1}&=s+1-2i-1&\quad\text{if}\; q'_{i+1}-q'_i=1
\end{align}
for $0\leq i\leq\lfloor s/2 \rfloor$. Moreover, if $s+1$ is odd, we add $p_{-1}=q_{-1}=0$ or remove the case when $(p'_0,q'_0)=(0,0)$ to have a triple of even length.

For example, let $\mathbf{p}'=(5,8,9,12,14)$ and $\mathbf{q}'=(1,2,6,10,11)$ with $s=4$. Since $s+1=5$ is odd, we set $p_{-1}=q_{-1}=0$. Then we have $\mathbf{p}=(0,5,8,8,8,9)$ and $\mathbf{q}=(0,1,1,2,6,6)$ with corresponding $k_i=s+1-i$ for $i=-1,0,\ldots s$.

For the triple $\tau$ arising this way, we can recover a {\it{vexillary signed permutation}} $w:=w(\tau)$ by the algorithm in \cite[\S 2]{AF20} after replacing $p_i$ and $q_i$ by $p_i+1$ and $q_i+1$ respectively. The corresponding partition $\lambda$ is given by $p_i+q_i$. 
 The vexillary element $w$ can be characterized as follows.

\begin{lemma}
The signed permutation $w$ is unique and minimal in Bruhat order such that
\[
r_w(q_i,\overline{p_i})=\#\{p<\overline{p_i}:w(p)>q_i\}\geq s+1-i \quad\text{for all}\;i,
\]
and its length is $\sum_i(p_i+q_i)$.
\end{lemma}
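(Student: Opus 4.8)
The plan is to combine the rank-function description of Bruhat order recorded above with the structural theory of vexillary signed permutations from \cite{AF20}. Note first that the family of inequalities $r_u(q_i,\overline{p_i})\geq s+1-i$ cuts out an up-set in Bruhat order on $W_n^+$: since $u\leq v$ forces $r_u(a,\overline{b})\leq r_v(a,\overline{b})$ for all $a$ and $0\leq b\leq n-1$, any element dominating a solution is again a solution. Hence the assertion that $w$ is the unique minimal element amounts to two things: first, that $w$ itself satisfies all the inequalities, and second, that every solution $v$ dominates $w$, i.e.\ $r_w\leq r_v$ pointwise.

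For the first point I would trace through the reconstruction of \cite[\S 2]{AF20}. By \eqref{a}--\eqref{c} the triple $\tau=(\mathbf p,\mathbf q,\mathbf k)$ satisfies $k_i=s+1-i$, and the shifts $p_i\mapsto p_i+1$, $q_i\mapsto q_i+1$ are exactly the normalization under which the algorithm outputs a vexillary signed permutation whose rank function meets the prescribed values with equality, $r_w(q_i,\overline{p_i})=s+1-i$. What must be checked here is that $\tau$ is a legitimate input, namely that $\mathbf p$ and $\mathbf q$ are weakly increasing in the pattern demanded by the essential-set description of a vexillary element. This is precisely where the hypothesis \eqref{con} enters: $\min\{p'_{i+1}-p'_i,q'_{i+1}-q'_i\}=1$ guarantees that at each doubling step exactly one coordinate advances by one, yielding the staircase arrangement of conditions that characterizes vexillarity.

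For minimality and uniqueness I would appeal to the principle already recorded after \eqref{eqn:list}: for each single inequality $r_u(a,\overline b)\geq k$ there is a unique Bruhat-minimal signed permutation, and for the vexillary triple $\tau$ the several conditions are mutually compatible, so their common minimal solution is realized by a single permutation. Concretely, one shows that $r_w$ is the pointwise-smallest admissible rank function meeting every inequality, whence $w\leq v$ for every solution $v$; the non-redundancy follows from the saturation $r_w(q_i,\overline{p_i})=s+1-i$ of the first step together with the vexillary (pattern-avoidance) property, which prevents the conditions from fragmenting into incompatible pieces and forces $\tau$ to be a reduced defining list in the sense of the set $S$ above.

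Finally, the length formula is immediate from the vexillary structure: the length of a vexillary signed permutation equals the size of its associated shape, and here that shape is $\lambda$ with $\lambda_i=p_i+q_i$, so that $\ell(w)=|\lambda|=\sum_i(p_i+q_i)$. The main obstacle, I expect, is the first step---certifying that $\tau$ genuinely satisfies the vexillarity hypotheses of \cite{AF20} and that its output realizes the rank values with equality rather than mere inequality. Once that is secured, minimality, uniqueness, and the length formula all descend from the general theory of \cite{AF20,AIJK}.
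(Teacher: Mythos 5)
Your proposal is correct and follows essentially the same route as the paper: the paper gives no independent argument for this lemma, treating it exactly as you do --- as the characterization of the vexillary signed permutation $w(\tau)$ supplied by the general theory of \cite{AF20} (up-set property of rank conditions under Bruhat order, validity of the staircase triple produced by \eqref{a}--\eqref{c}, and the length formula $\ell(w)=|\lambda|=\sum_i(p_i+q_i)$). The only slight imprecision is your phrase ``exactly one coordinate advances by one'': at each step of the doubled list exactly one of $p,q$ advances, but only the coordinate governed by \eqref{b} or \eqref{c} advances by exactly $1$, while the other may jump by more --- this does not affect validity of the triple, since the rank decrement $1$ never exceeds the total advance.
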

We remark that the length is equal to the codimension of the corresponding Schubert variety in the isotropic flag variety. Let us define a locus $V_{\mathbf{p},\mathbf{q}}$ by conditions
\[
\mathrm{dim}(E_{p_i}\cap F_{q_i})\geq s+1-i\quad\text{for all }i.
\]
Then the lemma implies that 
\[
V_{\mathbf{p},\mathbf{q}}=V_{\mathbf{p}',\mathbf{q}'}.
\] 
In fact, the procedure \eqref{a} covers the conditions when $i=j$ in \eqref{eqn:rk}. Furthermore, both \eqref{a} and \eqref{b} contains the case where $\mathrm{dim}(E_{p'_{i+1}}\cap F_{q'_{i}})\geq s+1-2i-1$, since 
\begin{align*}
r_w(q'_i,\overline{p'_{i+1}})&=r_w(q'_i,\overline{p'_i+1})\geq r_w(q'_i,\overline{p'_i})-1\geq s+1-2i-1.
\end{align*}
 Similarly we can see that the condition $\mathrm{dim}(E_{p'_i}\cap F_{q'_{i+1}})\geq s+1-2i-1$ is included in \eqref{a} and \eqref{c}. 
 Hence, the scheme structure of $V_{\mathbf{p}',\mathbf{q}'}=V_{\mathbf{p},\mathbf{q}}$ is induced by that of a vexillary Schubert variety associated to $w$ in the flag variety $Fl$ of type D.

\section{The two-pointed Prym-Brill-Noether loci}\label{sec3}

 In this section we discuss the Brill-Noether loci in the Prym variety $\mathscr{P}^\pm$ with special vanishings at two points, and investigate their classes in connective K-theory. 
 
 Let $C$ be a smooth algebraic curve of genus $g$ over an algebraically closed field $\mathbb{K}$ of characteristic not $2$. Let $\pi:\widetilde{C}\rightarrow C$ be an irreducible \'etale double covering. Classically, the covering $\pi$ determines a class $\epsilon$ of order $2$ in $\mathrm{Jac}(C)$ such that $\pi_*\mathcal{O}_{\widetilde{C}}=\mathcal{O}_C\oplus\epsilon$. Conversely, any non-trivial $2$-torsion point $\epsilon$ in $\mathrm{Jac}(C)$ defines an irreducible \'etale double covering $\pi:\widetilde{C}=\mathrm{Spec}(\mathcal{O}_C\oplus\epsilon)\rightarrow C$. So, we may represent the double covering $\pi$ by the pair $(C,\epsilon)$. 
 In particular, the \'etale double covering $\pi$ induces a norm map $\mathrm{Nm}:\mathrm{Pic}^{2g-2}(\widetilde{C})\rightarrow \mathrm{Pic}^{2g-2}(C)$. The scheme-theoretic inverse image $\mathrm{Nm}^{-1}(K_C)$ of the canonical class $K_C\in\mathrm{Pic}^{2g-2}(C)$ is given by the disjoint union 
 \[
\mathrm{Nm}^{-1}(K_C)\cong \mathscr{P}^+\sqcup\mathscr{P}^-,
 \]
of two connected irreducible components $\mathscr{P}^+=\{L\;|\;h^0(\widetilde{C},L)\equiv 0\;(\text{mod}\;2)\}$ and $\mathscr{P}^-=\{L\;|\;h^0(\widetilde{C},L)\equiv 1\;(\text{mod}\;2)\}$. Both varieties $\mathscr{P}^+$ and $\mathscr{P}^-$ are translates of the {Prym variety} $\mathscr{P}(C,\epsilon)=(\mathrm{Ker(Nm)})^0$ of dimension $g-1$, the connected component of the kernel of $\mathrm{Nm}:\mathrm{Jac}(\widetilde{C})\rightarrow \mathrm{Jac}(C)$ containing the origin. For more details about Prym varieties, see \cite[App. C]{ACGH} and \cite{Mum}. We denote by $\mathscr{P}^\pm$ either one of $\mathscr{P}^+$ or $\mathscr{P}^-$ and not the union of these varieties.

 We fix two points $P$ and $Q$ on the double cover $\widetilde{C}$ with $P-Q$ nontorsion. For $L\in \mathscr{P}^\pm$, let
\begin{align*}
\mathbf{a}'&=(0\leq a'_0< a'_1< \cdots<a'_r\leq 2g-2)\quad\text{and}\\
\mathbf{b}'&=(0\leq b'_0< b'_1< \cdots<b'_r\leq 2g-2)
\end{align*}
be strictly increasing sequences such that 
\[
\min\{a'_{i+1}-a'_i,b'_{i+1}-b'_i\}=1
\] for $i=0,\ldots, r-2$. 
For the above $\mathbf{a}'$ and $\mathbf{b}'$, the {\it two-pointed Prym-Brill-Noether loci} $V_{\mathbf{a}',\mathbf{b}'}^r(C,\epsilon,P,Q)\subset \mathscr{P}^\pm$  of line bundles associated to $(C,\epsilon, P, Q)$ is defined by 
 \begin{align}\label{def:prym}
 V_{\mathbf{a}',\mathbf{b}'}^r(C,\epsilon,P,Q):=\left\{L\in \right.& \mathrm{Nm}^{-1}(K_C)\;|\;h^0(\widetilde{C},L)\equiv r+1\;\text{(mod 2)},\\
&\left. h^0(\widetilde{C}, L(-b'_{j}Q'-a'_{i}P'))\geq r+1-j-i\;\text{for all $i,j$}\right\}\nonumber
 \end{align}
in $\mathrm{Pic}^{2g-2}(\widetilde{C}).$

Now, we provide the scheme structure of the two-pointed Prym-Brill-Noether loci as generalized version of the unpointed Prym-Brill-Noether loci \cite{DCP}, and analogous result to the pointed case in \cite{Tar}. 

Let $\mathcal{L}$ be the Poincar\'e line bundle on $\mathrm{Pic}^{2g-2}(\widetilde{C})\times \widetilde{C}$. We consider the double covering $1\times \pi: \mathrm{Pic}^{2g-2}(\widetilde{C})\times\widetilde{C}\rightarrow \mathrm{Pic}^{2g-2}(\widetilde{C})\times C$ induced by $\pi$, and set $\mathcal{E}=(1\times \pi)_*\mathcal{L}$. 
Then $\mathcal{E}|_{\mathscr{P}^\pm\times C}$, the restriction of $\mathcal{E}$ to $\mathscr{P}^\pm\times C$ is a rank $2$ vector bundle. The vector bundle $\mathcal{E}|_{\mathscr{P}^\pm\times C}$ is equipped with a nondegenerate quadratic form $\mathcal{Q}$. (See \cite[p.343]{Mum}, \cite[p.185]{Mum2}, and \cite[p.699]{DCP}.)

Let $q_1:\mathrm{Pic}^{2g-2}(\widetilde{C})\times C\rightarrow \mathrm{Pic}^{2g-2}(\widetilde{C})$ be the projection to the first factor and $q_2:\mathrm{Pic}^{2g-2}(\widetilde{C})\times C\rightarrow C$ the projection to the second factor. Let $\nu_1:\mathrm{Pic}^{2g-2}(\widetilde{C})\times \widetilde{C}\rightarrow \mathrm{Pic}^{2g-2}(\widetilde{C})$ be the first projection, and $\nu_2:\mathrm{Pic}^{2g-2}(\widetilde{C})\times \widetilde{C}\rightarrow \widetilde{C}$ be the second projection. These maps provide the commutative diagram, as follows.

\begin{equation*}\label{Diag}
\begin{tikzcd}[column sep=normal]
 &&\ar[lld, "\nu_1"', bend right=0]\mathrm{Pic}^{2g-2}(\widetilde{C})\times\widetilde{C}\ar[d,"1\times\pi"] \ar[rr,"\nu_2"]&&\widetilde{C}\ar[d,"\pi"]\; \\
\mathrm{Pic}^{2g-2}(\widetilde{C})  && \ar[ll,"q_1"] \mathrm{Pic}^{2g-2}(\widetilde{C})\times C\ar[rr, "q_2"] &&C\\
\end{tikzcd}
\end{equation*}

Let $D=\sum_{i=1}^N p_i$ denote a divisor on $C$ for sufficiently large enough $N$, where $p_i$'s are distinct such that $p_i\neq \pi(P),\pi(Q)$. Let $\widetilde{D}=\pi^*D$ and $\mathcal{L}(\pm \widetilde{D})=\mathcal{L}\otimes\nu_2^*\mathcal{O}_{\widetilde{C}}(\pm \widetilde{D})$. We define 
\[
\mathcal{V}:=(\nu_1)_*(\mathcal{L}(\widetilde{D})/\mathcal{L}(-\widetilde{D}))|_{\mathscr{P}^\pm}=(q_1)_*(\mathcal{E}(D)/\mathcal{E}(-D))|_{\mathscr{P}^\pm},
\]
where $\mathcal{E}(\pm D)=\mathcal{E}\otimes q_2^*(\mathcal{O}_C(\pm D))$. Let $n=2N=2\mathrm{deg}(D)$. The vector bundle $\mathcal{V}$ has rank $2n$ and is endowed with a nondegenerate quadratic form $\mathfrak{q}$ induced by the form $\mathcal{Q}$ with values in $\mathcal{O}_{\mathscr{P}^\pm}$.
For $0\leq i\leq r$, we let
\[
\mathcal{W}_{a_i'}:=(\nu_1)_*(\mathcal{L}(\widetilde{D}-a'_{i}P))|_{\mathscr{P}^\pm}\quad\text{and}\quad\mathcal{U}_{b_i'}:=(\nu_1)_*(\mathcal{L}(-b'_{i}Q)/\mathcal{L}(-\widetilde{D}))|_{\mathscr{P}^\pm}.
\]
Then $\mathcal{W}_{a_i'}$ and $\mathcal{U}_{b_i'}$ are subbundles of $\mathcal{V}$ and isotropic with respect to the form $\mathfrak{q}$, with their ranks
\[
\mathrm{rk}(\mathcal{W}_{a_i'})=n-a'_{i}\quad\text{and}\quad\mathrm{rk}(\mathcal{U}_{b_i'})=n-b'_{i},
\]
via the Riemann-Roch theorem. We thus have natural flags
\[
\mathcal{W}_{a_r'}\subset \mathcal{W}_{a_{r-1}'}\subset\cdots\subset \mathcal{W}_{a_0'}\subset \mathcal{V}
\quad\text{and}\quad
\mathcal{U}_{b_r'}\subset \mathcal{U}_{b_{r-1}'}\subset\cdots\subset \mathcal{U}_{b_0'}\subset \mathcal{V}
\]
of vector bundles on $\mathscr{P}^\pm$. 

We take $L\in \mathscr{P}^{\pm}$, and let $V:=H^0(\widetilde{C},L(\widetilde{D})/L(-\widetilde{D}))$ be a vector space with a nondegenerate form induced by $\mathfrak{q}$. We consider subspaces $W_{a_i'}:=H^0(\widetilde{C},L(\widetilde{D}-a'_i P'))$ and $U_{b_i'}:=H^0(\widetilde{C},L(-b'_iQ')/L(-\widetilde{D}))$ of $V:=H^0(\widetilde{C},L(\widetilde{D})/L(-\widetilde{D}))$. Then $W_{a_i'}$'s and $U_{b_j'}$ are all isotropic to the nondegenerate quadratic form. Furthermore, through Mumford's construction \cite[p. 183]{Mum2}, we establish $H^0(\widetilde{C},L(-b_j'Q-a_i'P))$ as the intersection of two isotropic subspaces $W_{a_i'}$ and $U_{b_j'}$. That is, 
\[
H^0(\widetilde{C},L(-b_j'Q-a_i'P))=W_{a_i'}\cap U_{b_j'}\subset V.
\] 
 In particular, we can globalize this construction over the Prym varieties $\mathscr{P}^\pm$, so that
$V_{\mathbf{a}',\mathbf{b}'}^r(C,\epsilon,P,Q)$ can be defined set-theoretically by the condition 
 \begin{equation}\label{cond}
 \mathrm{dim}(\mathcal{W}_{a_i'}\cap\mathcal{U}_{b_j'})\geq r+1-j-i,
 \end{equation}
 the fiberwise intersection of two isotropic subbunddles $\mathcal{W}_{a_i'}$ and $\mathcal{U}_{b_j'}$ in dimension at least $r+1-j-i$ for all $i,j$.

 Combined with the triple $\tau(\mathbf{a}',\mathbf{b}')=(\mathbf{a},\mathbf{b},\mathbf{k})$ from \S\ref{sec2}, it is equivalent to say that the two-pointed Prym-Brill Noether loci $V_{\mathbf{a},\mathbf{b}}^r(C,\epsilon,P,Q)=V_{\mathbf{a}',\mathbf{b}'}^r(C,\epsilon,P,Q)$ is the locus of line bundles $L$ in $\mathscr{P}^\pm$ such that  
 \[
 h^0(\widetilde{C},L(-b_iQ-a_iP))=\mathrm{dim}(\mathcal{W}_{a_i}\cap\mathcal{U}_{b_i})\geq r+1-i\quad\text{ for $i=0,\ldots,r$}.
 \]

 The scheme structure of this presentation arises in a natural way from the scheme structure inherent in a Schubert variety associated to a vexillary element $w(\tau)$ in the isotropic flag variety $Fl$ of type D, where $\tau=\tau(\mathbf{a}',\mathbf{b}')$. We emphasize that the scheme structure should be taken by the closure of the locus where the equality holds, as described in \cite[\S4]{And19}. 
 Thus, $V_{\mathbf{a},\mathbf{b}}^r(C,\epsilon,P,Q)$ has the expected dimension $g-1-|\mathbf{a}+\mathbf{b}|$ where $|\mathbf{a}+\mathbf{b}|:=\sum_i(a_i+b_i)$. Moreover, if $V_{\mathbf{a}',\mathbf{b}'}^r(C,\epsilon,P,Q)$ has pure codimension $|\mathbf{a}+\mathbf{b}|$, it is Cohen-Macaulay by \cite{Ra}, cf. \cite[Proposition 2 (2)]{DCP}.  
 
 Now, we provide formulas for the K-theory classes of such two-pointed Prym-Brill-Noether loci $V_{\mathbf{a},\mathbf{b}}^r(C,\epsilon,P,Q)$. 
 
For an irreducible variety $X$, we denote by $CK^*(X)$ the connective K-cohomology which is a graded algebra over $\mathbb{Z}[\beta]$ where $\beta$ is in degree $-1$. Especially, $CK^*(X)$ interpolates K-cohomology $K^\circ(X)$ at $\beta=-1$ and the Chow ring $A^*(X)$ at $\beta=0$. We refer the reader to \cite[Appendix A]{And19}, \cite[\S 1.1]{ACT} for the relevant facts of the connective K-theory.

 Let $\lambda$ be the partition defined by $\lambda_{i}=a_{r+1-i}+b_{r+1-i}$. For $i=1,\ldots, r+1$, let $c(i)=c^K((\mathcal{W}_{r+1-j})_{\mathscr{P}^\pm}^\vee)$ be the K-theoretic Chern class of $(\mathcal{W}_{r+1-j})_{\mathscr{P}^\pm}^\vee$ and
\begin{equation*}
e_m(i) =
\begin{cases}
(-1)^{\mathrm{dim}(\mathcal{U}\cap\mathcal{W})}\gamma(\mathcal{W}_{\mathscr{P}^\pm},\mathcal{U}_{\mathscr{P}^\pm}) c^K_{\lambda_{i}}(\mathcal{W}_{\mathscr{P}^\pm}/(\mathcal{W}_{r+1-i})_{\mathscr{P}^\pm}) & \text{if $m=\lambda_{i}$}\\
0& \text{otherwise}
\end{cases}
\end{equation*}
as the specialized {\it Euler classes}. See \cite[Appendix B]{And19} for the Euler classes in details. Since the bundles $\mathcal{U}_i$ have trivial Chern classes for sufficiently positive $\widetilde{D}$ as in \cite[Lemma 5.1]{DCP}, we have the following theorem by \cite[Theorem 4]{And19}.

Let $d(i)=c(i)+\sigma(i)e(i)$ for $\sigma(i)=(-1)^i$, $i=1,\ldots, r+1$. Let $T_i$ be the {\it raising operator} raising the index of $c(i)$ by one, and let $R_{ij}=T_i/T_j$. Let $\delta_i$ denote the operator acting by sending $\sigma(i)$ to $0$. So, the action of $\delta_i$ replaces $d(i)$ by $c(i)$. We write $\widetilde{T}_i$ for $\delta_iT_i$.

\begin{theorem}\label{thm:ck}
Let $\ell_\circ:=\ell(\lambda)$ be the number of non-zero components of $\lambda$. The dimension of $V_{\mathbf{a},\mathbf{b}}^r(C,\epsilon,P,Q)$ is at least $g-1-|\mathbf{a}+\mathbf{b}|$. If $\mathrm{dim}(V_{\mathbf{a},\mathbf{b}}^r(C,\epsilon,P,Q))= g-1-|\mathbf{a}+\mathbf{b}|$, then we have
\begin{align*}
\left[V_{\mathbf{a},\mathbf{b}}^r(C,\epsilon,P,Q)\right]&=Pf_\lambda(d(1),\ldots,d(\ell_\circ);\beta)\\
&:=Pf(M)
\end{align*}
in $CK^*(\mathscr{P}^\pm)[1/2]$. Here $M$ is the skew-symmetric matrix with entries 
\begin{align*}
m_{i,j}&=\dfrac{1-\delta_i\delta_jR_{ij}}{1+\delta_i\delta_j(R_{ij}-\beta T_i)}\cdot\dfrac{(1-\beta \widetilde{T}_i)^{{\ell_\circ}-i-\lambda_i+1}}{2-\beta\widetilde{T}_i}\cdot\dfrac{(1-\beta\widetilde{T}_j)^{{\ell_\circ}-j-\lambda_j+1}}{2-\beta\widetilde{T}_j}\\
&\cdot (c_{\lambda_i}(i)-(-1)^{\ell_\circ}e_{\lambda_i}(i))\cdot(c_{\lambda_j}(j)+(-1)^{\ell_\circ}e_{\lambda_j}(j).
\end{align*}
If $\ell_\circ$ is odd, we augment the matrix $M$ by
\begin{align*}
\label{aug}
m_{0j}&=(1-\beta\widetilde{T}_j)^{{\ell_\circ}-j-\lambda_j+1}(2-\beta\widetilde{T}_j)^{-1}\cdot(c_{\lambda_j}(j)+ e_{\lambda_j}(j)).
\end{align*}
\end{theorem}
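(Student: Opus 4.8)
The plan is to present $V_{\mathbf{a},\mathbf{b}}^r(C,\epsilon,P,Q)$ as a vexillary degeneracy locus of Lie type D and then to read off its class from \cite[Theorem 4]{And19}. All the geometric ingredients are already assembled: on $\mathscr{P}^\pm$ we have the rank-$2n$ bundle $\mathcal{V}$ with its nondegenerate quadratic form $\mathfrak{q}$, together with the two isotropic flags $\mathcal{W}_{a'_\bullet}$ and $\mathcal{U}_{b'_\bullet}$, and \eqref{cond} exhibits the locus through the rank conditions $\mathrm{dim}(\mathcal{W}_{a'_i}\cap\mathcal{U}_{b'_j})\geq r+1-j-i$. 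Applying the Lemma of Section \ref{sec2} to the triple $\tau(\mathbf{a}',\mathbf{b}')=(\mathbf{a},\mathbf{b},\mathbf{k})$, these conditions are equivalent to the reduced list $\mathrm{dim}(\mathcal{W}_{a_i}\cap\mathcal{U}_{b_i})\geq r+1-i$, which is precisely the vexillary Schubert condition attached to $w=w(\tau)$ in the type-D isotropic flag bundle; its length, hence the codimension of the locus, equals $\sum_i(a_i+b_i)=|\mathbf{a}+\mathbf{b}|$.

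First I would dispatch the dimension statement, which is the standard one for degeneracy loci: since $V_{\mathbf{a},\mathbf{b}}^r$ is the pullback of a codimension-$|\mathbf{a}+\mathbf{b}|$ Schubert locus along the classifying map to the flag bundle, every component has codimension at most $|\mathbf{a}+\mathbf{b}|$, i.e. dimension at least $g-1-|\mathbf{a}+\mathbf{b}|$, with no genericity assumption. When equality holds the locus has pure codimension $|\mathbf{a}+\mathbf{b}|$ and is Cohen-Macaulay by \cite{Ra}, so that its fundamental class is computed by the universal degeneracy-locus formula with no excess-intersection correction. With these hypotheses in hand I would invoke \cite[Theorem 4]{And19}. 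Two verifications are needed to match its output to the stated matrix $M$: first, that $w(\tau)$ is genuinely vexillary, which is exactly the content of the construction \eqref{a}--\eqref{c} together with the Lemma, so that Anderson's single-Pfaffian hypothesis applies; and second, that the entries specialize as claimed, with Chern inputs the classes $c(i)=c^K((\mathcal{W}_{r+1-i})^\vee_{\mathscr{P}^\pm})$ and, using that the higher Chern classes of the $\mathcal{U}_{b'_i}$ vanish for sufficiently positive $\widetilde{D}$ as in \cite[Lemma 5.1]{DCP}, all $\mathcal{U}$-contributions collapsing into the specialized Euler classes $e_m(i)$. The identity $Pf_\lambda(d(1),\ldots,d(\ell_\circ);\beta)=Pf(M)$ then follows.

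The hard part will be the faithful transfer of Anderson's type-D Pfaffian into the Prym normalization. The factors $2-\beta\widetilde{T}_i$ in the denominators and the overall inversion of $2$ (hence the base ring $CK^*(\mathscr{P}^\pm)[1/2]$) come from the quadratic-form structure on $\mathcal{V}$, and I expect the most delicate bookkeeping to be (i) propagating the sign $(-1)^{\mathrm{dim}(\mathcal{U}\cap\mathcal{W})}$ and the comparison class $\gamma(\mathcal{W}_{\mathscr{P}^\pm},\mathcal{U}_{\mathscr{P}^\pm})$ correctly through the Euler-class specialization, and (ii) treating the parity of $\ell_\circ$, where the odd case forces the augmentation row $m_{0j}$ so that $M$ becomes an even-sized skew-symmetric matrix with a well-defined Pfaffian. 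This parity is intertwined with the choice of component $\mathscr{P}^+$ versus $\mathscr{P}^-$, equivalently with the mod-$2$ condition on $\mathrm{dim}(E_0\cap F_0)$ fixed in Section \ref{sec2}, and checking that the augmentation is consistent with that condition is the step that demands the most care.
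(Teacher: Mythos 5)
Your proposal is correct and follows essentially the same route as the paper: realize $V_{\mathbf{a},\mathbf{b}}^r(C,\epsilon,P,Q)$ as the vexillary type-D degeneracy locus attached to $w(\tau)$ via the bundle $\mathcal{V}$ with its quadratic form and the two isotropic flags, reduce the rank conditions using the Lemma of \S\ref{sec2}, get the dimension bound and Cohen-Macaulayness (via \cite{Ra}) from the degeneracy-locus structure, and then apply \cite[Theorem 4]{And19} with the $\mathcal{U}$-contributions trivialized by \cite[Lemma 5.1]{DCP}. The bookkeeping points you flag (the Euler-class sign and $\gamma$ factor, the parity of $\ell_\circ$ and the augmentation row) are exactly the specializations the paper inherits directly from Anderson's formula.
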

By the property in the connective K-theory, we have the class in the chow ring $A^*(\mathscr{P}^\pm)$ at $\beta=0$, and K-theory class in $K^\circ(\mathscr{P}^\pm)$ at $\beta=-1$.

 In addition, we have Corollary \ref{cor33} from Theorem \ref{thm:ck} at $\beta=0$ and \cite[Lemma 5.2]{DCP} that the Chern classes $c(\mathcal{W}_i^\vee)=e^{2\xi}$ for sufficiently large $\widetilde{D}>>0$, which is independent of the choice of $i$.

In Corollary \ref{cor33}, when we take $\mathbf{a}$ and $\mathbf{b}$ constructed by $\mathbf{a}'=(0,\ldots,r)$ and $\mathbf{b}'=(0,\ldots, r)$ with \eqref{a} and \eqref{c}, the above corollary gives the result \cite[Theorem 9]{DCP}. Moreover, if we take $P=Q$, then we obtain the class of the pointed Prym-Brill-Noether loci with special vanishing orders $\mathbf{a}+\mathbf{b}=(a_i+b_i)_i$ at $P$.

\section{The coupled Prym-Petri map}\label{sec4}
In this section we modify the ideas in \cite{Tar} which consider the pointed case and adjust the classical Brill-Noether case for two points in \cite{Pflu}.

Let $C$ be a smooth curve of genus $g$. Let $\pi:\widetilde{C}\rightarrow C$ be an irreducible \'etale double covering associated to a non-trivial $2$-torsion point $\epsilon$ in $\mathrm{Jac}(C)$. Let $\iota:\widetilde{C}\rightarrow \widetilde{C}$ be the involution of $\widetilde{C}$. The involution acts on the space $H^0(\widetilde{C},K_{\widetilde{C}})$ so that one has a decomposition 
\[
H^0(\widetilde{C},K_{\widetilde{C}})\cong H^0(\widetilde{C},K_{\widetilde{C}})^+\oplus H^0(\widetilde{C},K_{\widetilde{C}})^-,
\]
where $H^0(\widetilde{C},K_{\widetilde{C}})^+$ is the space of invariants and $H^0(\widetilde{C},K_{\widetilde{C}})^-$ is the anti-invariant sections. Since $K_{\widetilde{C}}=\pi^*K_{C}$, and by the push-pull formula, the space $H^0(\widetilde{C},K_{\widetilde{C}})$ of differential forms splits into a direct sum 
\[
H^0(\widetilde{C},K_{\widetilde{C}})\cong H^0(C,K_{C})\oplus H^0(C,K_{C}\otimes\epsilon).
\]

Given the norm map $\mathrm{Nm}:\mathrm{Pic}(\widetilde{C})\rightarrow\mathrm{Pic}(C)$ induced by the covering $\pi$ as before, we have $\pi^*\circ \mathrm{Nm}=id_{\mathrm{Pic}(\widetilde{C})}\otimes\epsilon$ and $\mathrm{Nm}\circ\pi^*=2 id_{\mathrm{Pic}(C)}$, see \cite[(3.1.1-2)]{Sho}.

Let $P,Q\in\widetilde{C}$. Given $\bold{a}'=(0\leq a'_0<a'_1<\cdots<a'_r)$ and $\bold{b}'=(0\leq b'_0<b'_1<\cdots<b'_r)$ such that $\mathrm{min}\{a'_{i+1}-a'_i,b'_{i+1}-b'_i\}=1$ for $i=0,\ldots, r-2$, we have $\mathbf{a}=(a_0,\ldots,a_r)$ and $\mathbf{b}=(b_0,\ldots,b_r)$ of even length (by putting $a_{-1}=b_{-1}=0$ if necessary) from \eqref{a},\eqref{b},\eqref{c}, satisfying
 $h^0(\widetilde{C},L(-a_iP-b_iQ))=r+1-i$ for all $i$, $L\in V_{\mathbf{a},\mathbf{b}}^r(C,\epsilon,P,Q)$. We mainly focus on the case where $r+1$ is even for the rest of this paper, since the one where $r+1$ is odd follows similarly.
 
 We define 
\begin{equation}\label{aa}
T_{P,Q}^L(\mathbf{a},\mathbf{b})=\sum_{0\leq i\leq r}H^0(\widetilde{C},L(-a_iP-b_iQ))\otimes H^0(\widetilde{C},K_{\widetilde{C}}\otimes L^\vee(a_iP+b_iQ)).
\end{equation}
We will use $T_{P,Q}^L$ for $T_{P,Q}^L(\mathbf{a},\mathbf{b})$ as an abbreviation. It is worthwhile to note that $H^0(\widetilde{C},L(-a_{i+1}P-b_{i+1}Q))\subset H^0(\widetilde{C},L(-a_iP-b_iQ))$ such that there might be a nonempty intersection between terms in \eqref{aa}. The {\it{coupled Prym-Petri map}} is given by the composition $p\circ\mu$
\[
\mu_{P,Q}^L:T_{P,Q}^L\xrightarrow{\mu} H^0(\widetilde{C},K_{\widetilde{C}})\xrightarrow{p}H^0(C,K_C\otimes \epsilon).
\]

To be more specific, one may view the coupled Prym-Petri map as follows. 
We take sections
\begin{align*}
\sigma_{i}&\in H^0(\widetilde{C},L(-a_iP-b_iQ))\backslash \left(H^0(\widetilde{C},L(-a_{i+1}P-b_{i+1}Q))\right)\;\text{for}\;0\leq i\leq r-1,\;\text{and}\\
\sigma_{r}&\in H^0(\widetilde{C},L(-a_rP-b_rQ)).
\end{align*}
Then the composition $p\circ \mu$
\[
\displaystyle\bigoplus_{i=0}^r\langle \sigma_{i}\rangle\otimes H^0(\widetilde{C},K_{\widetilde{C}}\otimes L^\vee(a_iP+b_iQ))\xrightarrow{\mu}H^0(\widetilde{C},K_{\widetilde{C}})\xrightarrow{p}H^0(C,K_C\otimes\epsilon)
\] 
can be defined by sending $\sigma\otimes \tau$ to $1/2(\sigma.\tau-\iota(\sigma).\iota(\tau))$ where $\iota$ is the involution for the double cover $\widetilde{C}$.
In particular, given the inclusions
\[
\iota^*H^0(\widetilde{C},L(-a_iP-b_iQ))\hookrightarrow \iota^*H^0(\widetilde{C},L)\xrightarrow{\cong}H^0(\widetilde{C},K_{\widetilde{C}}\otimes L^\vee)\hookrightarrow H^0(\widetilde{C},K_{\widetilde{C}}\otimes L^\vee(a_iP+b_iQ))
\]
for each $i$, we may call the composition $p\circ \mu$ restricted to a map

\[
\overline{\mu}:\displaystyle\bigoplus_{i}\langle \sigma_{i}\rangle \otimes H^0(\widetilde{C},K_{\widetilde{C}}\otimes L^\vee(a_iP+b_iQ))/\iota^*H^0(\widetilde{C},L(-a_iP-b_iQ))\rightarrow H^0(C,K_C\otimes \epsilon)
\]
the coupled Prym-Petri map $\overline{\mu}$ for $L$.

We remark that the coupled Prym-Petri map recovers the Prym-Petri map in \cite{Wel85} with specialization $\bold{a}'=(0,1,\ldots,r)$ and $\mathbf{b}'=(0,1,\ldots,r)$.

Now, we provide information regarding the tangent space of the two-pointed Prym-Brill Noether loci with the coupled Prym-Petri map.
Let us consider the natural maps $\xi_{i}$ 
\begin{align*}
\xi_{i}:&H^1(\widetilde{C},\mathscr{O}_{\widetilde{C}})\rightarrow \mathrm{Hom}(H^0(\widetilde{C},L(-a_iP-b_iQ)),H^1(\widetilde{C},L(-a_iP-b_iQ))),
\end{align*}
as the transpose of 
$$\mu_{i}:H^0(\widetilde{C},K_{\widetilde{C}}\otimes L^\vee(a_iP+b_iQ))\otimes H^0(\widetilde{C},L(-a_iP-b_iQ))\rightarrow H^0(\widetilde{C},K_{\widetilde{C}})$$ given by $\xi_{i}(v)(\sigma)=v\cup\sigma$.  
More generally, we define $\xi_{i}$ inductively on $0\leq i\leq r-1$  by
\begin{align*}
\xi_{i}:\mathrm{ker}(\xi_{i+1})\rightarrow \mathrm{Hom}(&H^0(\widetilde{C},L(-a_iP-b_iQ))/H^0(\widetilde{C},L(-a_{i+1}P-b_{i+1}Q)),\\
&H^1(\widetilde{C},L(-a_iP-b_iQ))),
\end{align*}
which comes from the restriction of the natural maps
\[
H^1(\widetilde{C},\mathscr{O}_{\widetilde{C}})\rightarrow \mathrm{Hom}(H^0(\widetilde{C},L(-a_iP-b_iQ)),H^1(\widetilde{C},L(-a_iP-b_iQ))).
\]
In fact, the locus $V_{\mathbf{a},\mathbf{b}}^r(C,\epsilon,P,Q)$ contains the Zariski open subset
\begin{align*}
V_{\mathbf{a},\mathbf{b}}^r(C,\epsilon,P,Q)^\circ&:=W_{2g-2}^{\tau}(\widetilde{C},P,Q)^\circ\cap \mathscr{P}^+\quad\text{if $r$ is odd},\\
V_{\mathbf{a},\mathbf{b}}^r(C,\epsilon,P,Q)^\circ&:=W_{2g-2}^{\tau}(\widetilde{C},P,Q)^\circ\cap \mathscr{P}^-\quad\text{if $r$ is even},
\end{align*}
where
\[
W_{2g-2}^{\tau}(\widetilde{C},P,Q)^\circ:=\{L\in\mathrm{Pic}^{2g-2}(\widetilde{C}):h^0(\widetilde{C},L(-a_iP-b_iQ))=r+1-i\quad\text{for all $i$}\}.
\]
Then the scheme structure of $V_{\mathbf{a},\mathbf{b}}^r(C,\epsilon,P,Q)$ along $V_{\mathbf{a},\mathbf{b}}^r(C,\epsilon,P,Q)^\circ$ can be seen as the scheme structure on $W_{2g-2}^{\tau}(\widetilde{C},P,Q)^\circ\cap\mathscr{P}^\pm$, as shown in \cite[Prop. 4(1)]{DCP}, \cite[\S 2]{Tar}.

\begin{lemma}\label{lem:tan}
The tangent space $T_L(W_{2g-2}^{\tau}(\widetilde{C}, P,Q)^\circ)$ is isomorphic to $\mathrm{ker}(\xi_{0})$.
\end{lemma}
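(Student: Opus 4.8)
The plan is to compute the tangent space through first-order deformation theory, obtaining a common kernel of cup-product maps, and then to reconcile that kernel with the inductively defined $\mathrm{ker}(\xi_0)$. First I would recall that the tangent space to $\mathrm{Pic}^{2g-2}(\widetilde{C})$ at $L$ is canonically $H^1(\widetilde{C},\mathcal{O}_{\widetilde{C}})$, and that a tangent vector $v$ corresponds to a first-order deformation $L_v$ of $L$ over the dual numbers $\mathbb{K}[\epsilon]/(\epsilon^2)$. Writing $M_i:=L(-a_iP-b_iQ)$, the locus $W_{2g-2}^{\tau}(\widetilde{C},P,Q)^\circ$ is cut out near $L$ by the simultaneous conditions $h^0(\widetilde{C},M_i)=r+1-i$ for $0\leq i\leq r$. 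Because $P$ and $Q$ are fixed points, the induced deformation of each twist $M_i$ is governed by the \emph{same} class $v$, so a single tangent vector controls all of the conditions at once.

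Next I would treat each condition separately. By the standard Brill--Noether tangent-space computation (as in \cite[Ch.\ IV]{ACGH}, adapted to the pointed setting in \cite{Tar}), at a point $L$ where the count $h^0(M_i)=r+1-i$ is exact, a section $\sigma\in H^0(\widetilde{C},M_i)$ lifts to the first-order deformation precisely when $v\cup\sigma=0$ in $H^1(\widetilde{C},M_i)$; since every section must lift in order to preserve the rank, the tangent space to the single locus $\{h^0(M_i)\geq r+1-i\}$ is the kernel of the full cup-product map (defined on all of $H^1(\widetilde{C},\mathcal{O}_{\widetilde{C}})$)
\[
\xi_i^{\mathrm{f}}:H^1(\widetilde{C},\mathcal{O}_{\widetilde{C}})\longrightarrow \mathrm{Hom}\!\left(H^0(\widetilde{C},M_i),H^1(\widetilde{C},M_i)\right),\qquad v\mapsto(\sigma\mapsto v\cup\sigma).
\]
As $W_{2g-2}^{\tau}(\widetilde{C},P,Q)^\circ$ is the open stratum of the scheme-theoretic intersection of these conditions, and a first-order deformation preserves the stratum if and only if it preserves every condition, I would conclude $T_L(W_{2g-2}^{\tau}(\widetilde{C},P,Q)^\circ)=\bigcap_{i=0}^r\mathrm{ker}(\xi_i^{\mathrm{f}})$.

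Then I would match this intersection with $\mathrm{ker}(\xi_0)$ using the nested structure. Since $\mathbf{a}$ and $\mathbf{b}$ are increasing, the sheaf inclusions $M_{i+1}\hookrightarrow M_i$ give inclusions $H^0(M_{i+1})\subset H^0(M_i)$ together with compatible maps $H^1(M_{i+1})\to H^1(M_i)$. Hence if $v\in\mathrm{ker}(\xi_{i+1}^{\mathrm{f}})$, then $v\cup\sigma=0$ in $H^1(M_i)$ already for all $\sigma\in H^0(M_{i+1})$, so $v\cup(-)$ on $H^0(M_i)$ descends to the quotient $H^0(M_i)/H^0(M_{i+1})$; this descended map is exactly $\xi_i(v)$ in the inductive definition. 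Consequently $\mathrm{ker}(\xi_{i+1}^{\mathrm{f}})\cap\mathrm{ker}(\xi_i^{\mathrm{f}})=\mathrm{ker}(\xi_i)$, where $\mathrm{ker}(\xi_i)$ is the inductive kernel sitting inside $\mathrm{ker}(\xi_{i+1})$. A descending induction from $i=r$ (base case $\mathrm{ker}(\xi_r)=\mathrm{ker}(\xi_r^{\mathrm{f}})$) down to $i=0$ then yields $\bigcap_{i=0}^r\mathrm{ker}(\xi_i^{\mathrm{f}})=\mathrm{ker}(\xi_0)$, which is the claim.

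The hard part will be justifying the \emph{equality} $T_L(W_{2g-2}^{\tau}(\widetilde{C},P,Q)^\circ)=\bigcap_i\mathrm{ker}(\xi_i^{\mathrm{f}})$ rather than mere containment: the inclusion into the intersection is formal from deformation theory, but the reverse inclusion relies on the local determinantal (Fitting-ideal) model for the scheme structure of the locus recorded in \cite{DCP,And19}, which guarantees that on the open stratum where every rank is exact the obstruction to lifting a section is precisely the cup product. I would therefore spend most of the care verifying that the scheme structure along $W_{2g-2}^{\tau}(\widetilde{C},P,Q)^\circ$ coincides with the one coming from the vexillary Schubert variety of \S\ref{sec2}, so that the deformation-theoretic tangent computation applies stratum-by-stratum and the two kernels are literally equal.
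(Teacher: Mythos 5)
Your proposal is correct and takes essentially the same approach as the paper: the paper likewise identifies classes in $H^1(\widetilde{C},\mathscr{O}_{\widetilde{C}})$ with first-order deformations (via an explicit \v{C}ech computation with transition functions $f_{kl}(1+\epsilon g_{kl})$, following the pointed Brill--Noether argument of Ciliberto--Harris--Teixidor), shows that a section deforms preserving its vanishing orders at $P$ and $Q$ precisely when the cup product vanishes in $H^1(\widetilde{C},L(-a_iP-b_iQ))$, and concludes that the tangent space consists of the classes lying in all $\ker(\xi_i)$, which is $\ker(\xi_0)$ by the nested definition. Your descending induction identifying $\bigcap_i\ker(\xi_i^{\mathrm{f}})$ with the inductively defined $\ker(\xi_0)$ merely spells out what the paper asserts in its final sentence.
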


\begin{proof}
Similar arguments used for the classical Brill-Noether case with one vanishing point \cite[(3.1)]{CHT} hold for the proof of Lemma \ref{lem:tan}.

Let $L$ be a line bundle on $\widetilde{C}$ in $W_{2g-2}^{\tau}(\widetilde{C}, P,Q)^\circ$. Let us take an affine covering $\{U_k\}$ of $\widetilde{C}$ such that the line bundle $L$ trivializes over it. We denote by $\{f_{kl}\}$ the transition functions of the covering. The trivial infinitesimal deformation $\widetilde{C}_\epsilon$ of $\widetilde{C}$ has a covering $\{U_{k\epsilon}\}$ where $U_{k\epsilon}$ is the deformation of $U_k$, and the deformation $L_\epsilon$ of $L$ trivializes by transition functions of the form $f_{kl}(1+\epsilon g_{kl})$. Since such family $\{g_{kl}\}$ can be considered as elements of $H^1(\widetilde{C},\mathscr{O}_{\widetilde{C}})$, one can correspond the deformation $L_\epsilon$ to an element of $H^1(\widetilde{C},\mathscr{O}_{\widetilde{C}})$.  
 
 Given these setting, we let $\mathcal{S}$ denote a section of $L$ vanishing with orders $a_i$ at $P$ and $b_i$ at $Q$. The section $\mathcal{S}$ can be deformed to a section of $L_\epsilon$ preserving the orders of vanishing on $P$ and $Q$. To be specific, if $P$ and $Q$ are in some $U_k$, we can have a section $\mathcal{S}_k'$ defined on $U_k$ with vanishing to the orders $a_i$ on $P$ and $b_i$ on $Q$ such that 
\[
f_{kl}(1+\epsilon g_{kl})(\mathcal{S}_k+\epsilon \mathcal{S}_k')=\mathcal{S}_l+\epsilon \mathcal{S}_l'
\]
by taking 
\[
g_{kl}\mathcal{S}_l=\mathcal{S}_l'-f_{kl}\mathcal{S}_k'.
\]
This, in fact, implies that $g\mathcal{S}$ vanishes in $H^1(\widetilde{C},L(-a_iP-b_iQ))$, which gives the family $\{\mathcal{S}_k+\epsilon\mathcal{S}_k'\}$ as a section of $L_\epsilon$. An infinitesimal deformation of $H^1(\widetilde{C}, \mathscr{O}_{\widetilde{C}})$ lies in $\mathrm{ker}(\xi_{i})$ for all $i$ if and only if it belongs to the tangent space $T_L(W_{2g-2}^{\tau}(\widetilde{C}, P,Q)^\circ)$. Hence, we obtain the lemma, since $\mathrm{ker}(\xi_{0})$ is the subspace of $H^1(\widetilde{C},\mathscr{O}_{\widetilde{C}})$ included in all $\mathrm{ker}(\xi_{i})$.
\end{proof}

According to \cite[pg.673]{Wel85}, the tangent space $T_L(\mathscr{P}^\pm)$ of the Prym varieties at $L$ is given by
\[
T_L(\mathscr{P}^\pm)=H^0(C,K_C\otimes\epsilon)^\vee\xhookrightarrow{p^t} H^0(\widetilde{C},K_{\widetilde{C}})^\vee.
\]
Here $p^t$ is the transpose of the projection map $p$. So, with Lemma \ref{lem:tan}, we get the tangent space
\begin{equation}\label{eqn:tan}
T_L(V_{\mathbf{a},\mathbf{b}}^r(C,\epsilon,P,Q)^\circ)=\mathrm{ker}(\xi_{0})\cap H^0(C,K_C\otimes\epsilon)^\vee.
\end{equation}
Since $\xi_{0}$ is the transpose of $\mu$, $\mathrm{ker}(\xi_{0})=\mathrm{Im}(\mu)^\perp$. Then the righthand side of \eqref{eqn:tan} becomes $\mathrm{Im}(\mu)^\perp\cap H^0(\widetilde{C},K_{\widetilde{C}}\otimes\epsilon)^\vee$ so that 
\[
T_L(V_{\mathbf{a},\mathbf{b}}^r(C,\epsilon,P,Q)^\circ)=\mathrm{Im}(\overline{\mu})^\perp.
\]
In fact, by the Riemann-Roch formula, we have
\[
h^0(\widetilde{C},K_{\widetilde{C}}\otimes L^\vee(a_iP+b_iQ))=r+1-i+a_i+b_i\quad\text{for}\; 0\leq i\leq r.
\]
We know that 
\[
\mathrm{dim}\; H^0(C,K_C\otimes \epsilon)=\mathrm{dim}\;\mathrm{Im}(\overline{\mu})+\mathrm{dim}\;\mathrm{ker}(\overline{\mu}^t).
\]
We denote by $|\mathbf{a}|$ the sum of all $a_i$s and $|\mathbf{b}|$ the sum of $b_i$s. Since the dimension of the domain of $\overline{\mu}$ is $|\mathbf{a}|+|\mathbf{b}|$ and $\mathrm{dim}(H^0(C,K_C\otimes\epsilon))=g-1$, we can deduce the dimension of the tangent space $T_L(V_{\mathbf{a},\mathbf{b}}^r(C,\epsilon,P,Q)^\circ)$ as
\[
\mathrm{dim}(T_L(V_{\mathbf{a},\mathbf{b}}^r(C,\epsilon,P,Q)^\circ))=g-1-|\mathbf{a}|-|\mathbf{b}|+\mathrm{dim}\;\mathrm{ker}(\overline{\mu}).
\]

\begin{proposition}\label{prop:4.2}
Let $L\in V_{\mathbf{a},\mathbf{b}}^r(C,\epsilon,P,Q)^\circ$. Then we have
\[
\mathrm{dim}_L(V_{\mathbf{a},\mathbf{b}}^r(C,\epsilon,P,Q))\leq g-1-|\mathbf{a}|-|\mathbf{b}|+\mathrm{dim}\;\mathrm{ker}(\overline{\mu}).
\]
\end{proposition}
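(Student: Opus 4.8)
The plan is to obtain the inequality as a short consequence of the tangent space computation carried out immediately before the statement, together with the elementary fact that the local dimension of a finite-type scheme at a point is bounded above by the dimension of its Zariski tangent space there.

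First I would record that $V_{\mathbf{a},\mathbf{b}}^r(C,\epsilon,P,Q)^\circ$ is, by its definition as $W_{2g-2}^{\tau}(\widetilde{C},P,Q)^\circ\cap\mathscr{P}^\pm$, a Zariski open subset of $V_{\mathbf{a},\mathbf{b}}^r(C,\epsilon,P,Q)$. Since $L$ is assumed to lie in this open subset, the two loci have the same local ring at $L$, hence the same local dimension:
\[
\dim_L\bigl(V_{\mathbf{a},\mathbf{b}}^r(C,\epsilon,P,Q)\bigr)=\dim_L\bigl(V_{\mathbf{a},\mathbf{b}}^r(C,\epsilon,P,Q)^\circ\bigr).
\]
Next I would apply the general bound $\dim_L(X)\leq\dim_{\mathbb{K}}T_L(X)$, valid for any scheme $X$ of finite type over $\mathbb{K}$ at a closed point $L$; this is just the statement that the Krull dimension of the Noetherian local ring $\mathscr{O}_{X,L}$ does not exceed its embedding dimension $\dim_{\mathbb{K}}\mathfrak{m}_L/\mathfrak{m}_L^2$. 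No smoothness, reducedness, or irreducibility of $V^\circ$ at $L$ is needed for this step, which is important since the scheme structure has been imposed by taking closures.

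Finally I would insert the value of the tangent space dimension already derived from \eqref{eqn:tan}, the identification $T_L(V^\circ)=\mathrm{Im}(\overline{\mu})^\perp$, and the Riemann-Roch count preceding the statement, namely
\[
\dim_{\mathbb{K}}T_L\bigl(V_{\mathbf{a},\mathbf{b}}^r(C,\epsilon,P,Q)^\circ\bigr)=g-1-|\mathbf{a}|-|\mathbf{b}|+\dim\ker(\overline{\mu}).
\]
Concatenating the three displays gives the asserted inequality. All the real content has in fact been front-loaded into Lemma \ref{lem:tan} and the computation $\ker(\xi_0)=\mathrm{Im}(\mu)^\perp$; the only point requiring care in the write-up is to make sure that the tangent space entering the bound is genuinely that of $V^\circ$, and not of some auxiliary determinantal or incidence model, so that openness legitimately transfers the estimate to $V_{\mathbf{a},\mathbf{b}}^r(C,\epsilon,P,Q)$ itself. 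This is guaranteed by the identification of scheme structures along $V^\circ$ cited from \cite[Prop. 4(1)]{DCP} and \cite[\S2]{Tar}.
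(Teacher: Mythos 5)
Your proof is correct and takes essentially the same approach as the paper: the paper states Proposition \ref{prop:4.2} without a separate proof precisely because it is an immediate consequence of the tangent-space computation displayed just before it, namely $T_L(V_{\mathbf{a},\mathbf{b}}^r(C,\epsilon,P,Q)^\circ)=\mathrm{Im}(\overline{\mu})^\perp$ with $\dim T_L(V_{\mathbf{a},\mathbf{b}}^r(C,\epsilon,P,Q)^\circ)=g-1-|\mathbf{a}|-|\mathbf{b}|+\dim\ker(\overline{\mu})$, combined with the standard bound of local dimension by the Zariski tangent space dimension at a point of the open subset $V^\circ$. Your write-up simply makes explicit the openness transfer and the embedding-dimension inequality that the paper leaves implicit.
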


\begin{definition}
For general points $P$ and $Q$ in the \'{e}tale double cover $\widetilde{C}$, a quadruple $(C,\epsilon,P,Q)$ satisfies {\it the coupled Prym-Petri condition} if $\overline{\mu}$ is injective for all $L\in V_{\mathbf{a},\mathbf{b}}^r(C,\epsilon,P,Q)$ such that $h^0(\widetilde{C},L(-a_iP-b_iQ))=r+1-i$ for all $i$.
\end{definition}
Proposition \ref{prop:4.2} and the inequality on the dimension of the two-pointed Prym-Brill-Noether loci in Corollary \ref{cor:3.3} give rise to the following proposition.

\begin{proposition}\label{prop:4.4}
Suppose that $(C,\epsilon, P,Q)$ satisfies the coupled Prym-Petri condition, then $V_{\mathbf{a},\mathbf{b}}^r(C,\epsilon,P,Q)$ is either empty, or smooth of dimension $g-1-|\mathbf{a}|-|\mathbf{b}|$ at $L\in V_{\mathbf{a},\mathbf{b}}^r(C,\epsilon,P,Q)$ such that  $h^0(\widetilde{C},L(-a_iP-b_iQ))= r+1-i\;\text{for all $i$}$.
\end{proposition}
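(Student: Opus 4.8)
The plan is to read off the conclusion by comparing three numbers at the point $L$: an upper bound for the local dimension, a matching lower bound, and the dimension of the Zariski tangent space. Concretely, I would first note that a point $L$ with $h^0(\widetilde{C},L(-a_iP-b_iQ))=r+1-i$ for all $i$ is exactly a point of the Zariski open subset $V_{\mathbf{a},\mathbf{b}}^r(C,\epsilon,P,Q)^\circ$, so the germ of $V_{\mathbf{a},\mathbf{b}}^r(C,\epsilon,P,Q)$ at $L$ coincides with that of $V_{\mathbf{a},\mathbf{b}}^r(C,\epsilon,P,Q)^\circ$. If no such $L$ exists the locus is empty and there is nothing to prove, so I may assume one is given.

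Next I would invoke the coupled Prym-Petri condition: by hypothesis $\overline{\mu}$ is injective at every such $L$, so $\dim\ker(\overline{\mu})=0$. Substituting this into the tangent space identity established just before Proposition~\ref{prop:4.2}, namely
\[
\dim\bigl(T_L(V_{\mathbf{a},\mathbf{b}}^r(C,\epsilon,P,Q)^\circ)\bigr)=g-1-|\mathbf{a}|-|\mathbf{b}|+\dim\ker(\overline{\mu}),
\]
yields $\dim T_L=g-1-|\mathbf{a}|-|\mathbf{b}|$. On the dimension side, Proposition~\ref{prop:4.2} together with $\dim\ker(\overline{\mu})=0$ gives the upper bound $\dim_L V_{\mathbf{a},\mathbf{b}}^r(C,\epsilon,P,Q)\leq g-1-|\mathbf{a}|-|\mathbf{b}|$, while Corollary~\ref{cor:3.3} supplies the matching lower bound $\dim_L V_{\mathbf{a},\mathbf{b}}^r(C,\epsilon,P,Q)\geq g-1-|\mathbf{a}+\mathbf{b}|=g-1-|\mathbf{a}|-|\mathbf{b}|$, using $|\mathbf{a}+\mathbf{b}|=|\mathbf{a}|+|\mathbf{b}|$.

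Finally I would conclude by the standard smoothness criterion. The local dimension of $V_{\mathbf{a},\mathbf{b}}^r(C,\epsilon,P,Q)$ at $L$ equals $g-1-|\mathbf{a}|-|\mathbf{b}|$, which agrees with $\dim T_L$; since a point of a variety is smooth precisely when the dimension of its Zariski tangent space equals the local dimension, $L$ is a smooth point and $V_{\mathbf{a},\mathbf{b}}^r(C,\epsilon,P,Q)$ has dimension $g-1-|\mathbf{a}|-|\mathbf{b}|$ there. The argument is thus essentially a synthesis of the preceding results, and the only point demanding care is the bookkeeping that makes the two dimension estimates genuinely \emph{local} at $L$ and compatible with the tangent space formula: one must use that $V_{\mathbf{a},\mathbf{b}}^r(C,\epsilon,P,Q)^\circ$ is open in $V_{\mathbf{a},\mathbf{b}}^r(C,\epsilon,P,Q)$ (so that the tangent space computation and Proposition~\ref{prop:4.2} apply at the same $L$) and that the lower bound of Corollary~\ref{cor:3.3} holds componentwise so as to bound $\dim_L$ rather than only the global dimension. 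Once these are recorded, the comparison of the three equal quantities is immediate.
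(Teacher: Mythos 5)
Your proof is correct and follows essentially the same route as the paper: the paper derives Proposition~\ref{prop:4.4} precisely by combining the upper bound of Proposition~\ref{prop:4.2} (with $\ker(\overline{\mu})=0$ from the coupled Prym-Petri condition), the lower bound from Corollary~\ref{cor:3.3}, and the tangent space identity $\dim T_L(V_{\mathbf{a},\mathbf{b}}^r(C,\epsilon,P,Q)^\circ)=g-1-|\mathbf{a}|-|\mathbf{b}|+\dim\ker(\overline{\mu})$. Your write-up is in fact more explicit than the paper's one-line justification, and your care about locality of the two dimension bounds at $L$ is a reasonable point that the paper leaves implicit.
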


\section{General case}\label{sec5}

The ultimate goal of this section \S\ref{sec5} is to prove Theorem \ref{thm:main}. We adeptly draw upon and refine concepts from the unpointed case as outlined in \cite{Wel85} and the pointed case in \cite{Tar}. Moreover, we investigate the adaptation of the classical Brill-Noether case as explored in \cite{EH83} and pointed Brill-Noether case \cite{CHT}, in order to establish the proof of Theorem \ref{thm:main} in the context of two-pointed Brill-Noether case.

Let us first consider a quadruple $(\mathscr{C},\epsilon, P, Q)$, which arises from the geometric generic fiber of a family, as described below.

We denote by $T=\mathrm{Spec}(\mathcal{O})$ the spectrum of a discrete valuation ring $\mathcal{O}$ with parameter $t$. It has a special point at $0$ and a generic point at $\eta$. We should note that $T$ has trivial Picard group, which will be used later. Let $\phi:\mathscr{C}\rightarrow T$ indicate a flat projective family with a smooth surface $\mathscr{C}$. This family satisfies the following conditions: (i) the generic fiber $\mathscr{C}_\eta$ is smooth and geometrically irreducible, and (ii) the special fiber $\mathscr{C}_0$ is a reduced curve of arithmetic genus $g$. It consists of a sequence of smooth components, which can be either rational components or elliptic curves denoted as $E_1,\ldots, E_g$, as in Fig. \ref{fig:1}. The special fiber only has ordinary double points as singularities. It is required further that for $1\leq i\leq g$, each $E_i$ intersect the other components of $\mathscr{C}_0$ at the two points $P_i$ and $Q_i$. Here $P_i-Q_i\in \mathrm{Pic}^0(E_i)$ is not torsion.

\begin{figure}[htb]

\begin{tikzpicture}[scale=0.764]\draw [draw=black,ultra thin] (-5.5,-0.3) -- (-4.5,0.2);  
\node at (-4,0) {\ldots};

\draw [draw=black,ultra thin] (-2.5,-0.3) -- (-3.5,0.2);  
\draw [draw=black,ultra thin] (-3,-0.3) -- (-2,0.2);  
\draw (-1.27,0.2) ellipse (1cm and 0.7cm)
node[left=13, below=26, label={0:$E_{1}''$}]{};
\node at (-2.25, 0.07) {$\bullet$};
\node at (-2.3, 1.5) {$P_1''$};
\draw [draw=black,ultra thin] (0,-0.3) -- (1,0.2);  
\draw [draw=black,ultra thin] (0.5,-0.3) -- (-.5,0.2);  
\node at (1.6,0) {\ldots}; 
\node at (-.3, 0.1) {$\bullet$};
\node at (-.25, 1.5) {$Q_1''$};

\draw [draw=black,ultra thin] (3.15,-0.3) -- (2.15,0.2);  
\draw [draw=black,ultra thin] (2.65,-0.3) -- (3.65,0.2);  
\draw (4.35,0.2) ellipse (1cm and 0.7cm)
node[left=13, below=26, label={0:$E_{g-1}''$}]{};
\node at (3.4, 0.07) {$\bullet$};
\node at (3.45, 1.5) {$P_{g-1}''$};
\draw [draw=black,ultra thin] (5.65,-0.3) -- (6.65,0.2);  
\draw [draw=black,ultra thin] (6.15,-0.3) -- (5.15,0.2);  
\node at (7.25,0) {\ldots}; 
\node at (5.35, 0.1) {$\bullet$};
\node at (5.34, 1.5) {$Q_{g-1}''$};

 \begin{scope}[xshift=320, yshift=30, xscale=1, yscale=1]
    \begin{knot}[clip width=10, clip radius=15pt, consider self intersections,
    end tolerance=3pt,xscale=0.97, yscale=1,xshift=-10]
       \strand[thin] (0.3,2.7) 
          to[out=down, in=down] (-2,2.73)
            to[out=up, in=left] (-1.6,3.2)
          to[out=right, in=up] (0.35,-0.6)
           to[out=down, in=right]  (-0.8,-1.6)
          to[out=left, in=down] (-2.07,-.5)
           to[out=up, in=left]  (-0.2,3.2)
                     to[out=right, in=up]  (0.3,2.7);

    \end{knot}
    
\draw [draw=black,ultra thin] (-2.5,2.2) -- (-3.5,2.7);  
\draw [draw=black,ultra thin] (-3,2.2) -- (-2,2.7);  

\draw [draw=black,ultra thin] (-2.5,-1.3) -- (-3.5,-0.8);  
\draw [draw=black,ultra thin] (-3,-1.3) -- (-2,-0.8);  

 \draw [draw=black,ultra thin] (0.7,-1.3) -- (-0.3,-0.8);  
\draw [draw=black,ultra thin] (0.2,-1.3) -- (1.2,-0.8);   

\draw [draw=black,ultra thin] (0.7,2.2) -- (-0.3,2.7);  
\draw [draw=black,ultra thin] (0.2,2.2) -- (1.2,2.7);  

\node at (-3, .45) {$P_{g}''$};
\node at (.6, .45) {$Q_{g}''$};

\node at (-2.25, 2.58) {$\bullet$};
\node at (-.058, 2.58) {$\bullet$};
\node at (-2.25, -0.93) {$\bullet$};
\node at (-.058, -0.93) {$\bullet$};
\node at (-1.87, 4) {$P_{g}'$};
\node at (0, 4) {$Q_{g}'$};
\node at (1.6, 4) {};
  \node[left=37, below=44, label={0:$\widetilde{E}_{g}$}]{};

\end{scope}

\begin{scope}[xshift=0, yshift=100 ]
\draw [draw=black,ultra thin] (-5.5,-0.3) -- (-4.5,0.2);  
\node at (-4,0) {\ldots}; 
\node at (-5.3, 1.5) {$P_0$};
\node at (-5.3, -0.2) {$\bullet$};
\node at (-4.5, 1.5) {$Q_0$};
\node at (-4.7, .1) {$\bullet$};

\draw [draw=black,ultra thin] (-2.5,-0.3) -- (-3.5,0.2);  
\draw [draw=black,ultra thin] (-3,-0.3) -- (-2,0.2);  
\draw (-1.27,0.2) ellipse (1cm and 0.7cm)
node[left=13, below=26, label={0:$E_{1}'$}]{};
\node at (-2.25, 0.07) {$\bullet$};
\node at (-2.3, 1.5) {$P_1'$};
\draw [draw=black,ultra thin] (0,-0.3) -- (1,0.2);  
\draw [draw=black,ultra thin] (0.5,-0.3) -- (-.5,0.2);  
\node at (1.6,0) {\ldots}; 
\node at (-.3, 0.1) {$\bullet$};
\node at (-.25, 1.5) {$Q_1'$};

\draw [draw=black,ultra thin] (3.15,-0.3) -- (2.15,0.2);  
\draw [draw=black,ultra thin] (2.65,-0.3) -- (3.65,0.2);  
\draw (4.35,0.2) ellipse (1cm and 0.7cm)
node[left=13, below=26, label={0:$E_{g-1}'$}]{};
\node at (3.4, 0.07) {$\bullet$};
\node at (3.45, 1.5) {$P_{g-1}'$};
\draw [draw=black,ultra thin] (5.65,-0.3) -- (6.65,0.2);  
\draw [draw=black,ultra thin] (6.15,-0.3) -- (5.15,0.2);  
\node at (7.25,0) {\ldots}; 
\node at (5.35, 0.1) {$\bullet$};
\node at (5.32, 1.5) {$Q_{g-1}'$};

\end{scope}

\end{tikzpicture}

$\DownArrow[20pt][>=latex,black, ultra thick] $

\begin{tikzpicture}[scale=0.8]
\draw [draw=black,ultra thin] (-5.5,-0.3) -- (-4.5,0.2);  
\node at (-4,0) {\ldots};

\draw [draw=black,ultra thin] (-2.5,-0.3) -- (-3.5,0.2);  
\draw [draw=black,ultra thin] (-3,-0.3) -- (-2,0.2);  
\draw (-1.27,0.2) ellipse (1cm and 0.7cm)
node[left=13, below=26, label={0:$E_{1}$}]{};
\node at (-2.25, 0.07) {$\bullet$};
\node at (-2.3, 1.5) {$P_1$};
\draw [draw=black,ultra thin] (0,-0.3) -- (1,0.2);  
\draw [draw=black,ultra thin] (0.5,-0.3) -- (-.5,0.2);  
\node at (1.6,0) {\ldots}; 
\node at (-.3, 0.1) {$\bullet$};
\node at (-.25, 1.5) {$Q_1$};

\draw [draw=black,ultra thin] (3,-0.3) -- (2,0.2);  
\draw [draw=black,ultra thin] (2.5,-0.3) -- (3.5,0.2);  
\draw (4.2,0.2) ellipse (1cm and 0.7cm)
node[left=13, below=26, label={0:$E_{g-1}$}]{};
\node at (3.25, 0.07) {$\bullet$};
\node at (3.3, 1.5) {$P_{g-1}$};
\draw [draw=black,ultra thin] (5.5,-0.3) -- (6.5,0.2);  
\draw [draw=black,ultra thin] (6,-0.3) -- (5,0.2);  
\node at (7.1,0) {\ldots}; 
\node at (5.2, 0.1) {$\bullet$};
\node at (5.17, 1.5) {$Q_{g-1}$};

\draw [draw=black,ultra thin] (8.5,-0.3) -- (7.5,0.2);  
\draw [draw=black,ultra thin] (8,-0.3) -- (9,0.2);  
\draw (9.7,0.2) ellipse (1cm and 0.7cm)
node[left=13, below=26, label={0:$E_{g}$}]{};
\node at (8.73, 0.07) {$\bullet$};
\node at (8.8, 1.5) {$P_{g}$};
\draw [draw=black,ultra thin] (11,-0.3) -- (12,0.2);  
\draw [draw=black,ultra thin] (11.5,-0.3) -- (10.5,0.2);  
\node at (12.6,0) {\;}; 
\node at (10.7, 0.1) {$\bullet$};
\node at (10.75, 1.5) {$Q_{g}$}; 
\end{tikzpicture}  

    \caption{\;}
    \label{fig:1}
\end{figure}

Of significance here is the following property exhibited by such families: When considering a dominant morphism $T'\rightarrow T$ of spectra of discrete valuation rings, the family $\mathscr{C}'\rightarrow T'$ acquired through base extension and minimal resolution of singularities results in a special fiber $\mathscr{C}_0'$ that satisfies the same conditions as $\mathscr{C}_0$.
 
 If necessary, we extend the base to assume the existence of a line bundle $\epsilon$ on $\mathscr{C}$ with $\epsilon^2\cong\mathscr{O}_{\mathscr{C}}$. In addition, its restriction $\epsilon_\eta$ on $\mathscr{C}_\eta$ is nontrivial, while the restriction to $\mathscr{C}_0$ is non-trivial only on $E_g$. We define $\widetilde{\mathscr{C}}:=\mathrm{Spec}(\mathscr{O}_{\mathscr{C}}\oplus\epsilon)$, and the ring structure on $\widetilde{\mathscr{C}}$ arises due to the isomorphism $\epsilon^2\cong\mathscr{O}_{\mathscr{C}}$. This ensures $\pi:\widetilde{\mathscr{C}}\rightarrow\mathscr{C}$ acts as an \'{e}tale double covering map over $T$, with the generic fiber $\widetilde{\mathscr{C}}_\eta$ smooth and geometrically irreducible, and the special fiber $\widetilde{\mathscr{C}}_0$, a reduced curve of arithmetic genus $2g-1$ depicted as Fig. \ref{fig:1}. That is, the special fiber consists of smooth components arranged in a chain, some being rational and others elliptic, with only ordinary double points as singularities. We denote these elliptic components by $E_i'$, $E_i''$ for $i=1,\ldots,g-1$, and $\widetilde{E}_g$. When the map $\pi$ is restricted over each curve $E_i$, it forms a reducible double covering $E_i'\sqcup E_i''\rightarrow E_i$, where both $E_i'$ and $E_i''$ are isomorphic to $E_i$. In addition, restricting $\pi$ over $E_g$ gives rise to an irreducible double covering $\widetilde{E}_g\rightarrow E_g$. Especially, the difference between the preimages of two points where $E_g$ intersects the remaining curve $\mathscr{C}_0$ corresponds to a $2$-torsion point in the Jacobian $\mathrm{Jac}(\widetilde{E}_g)$. 
 
 Then we carefully pick sections $P:T\rightarrow \widetilde{\mathscr{C}}$ and $Q:T\rightarrow \widetilde{\mathscr{C}}$ such that the corresponding points $P_0$ and $Q_0$ are in a rational component that splits the chain of $\widetilde{\mathscr{C}}_0$, resulting in a connected component with an arithmetic genus of $2g-1$.

Presented below is an illustrative diagram encapsulating the maps:
\[
\begin{tikzcd}
\widetilde{\mathscr{C}} \arrow[rr, "\pi"] \arrow[dd, "\widetilde{\phi}"]&& \mathscr{C} \arrow[ddll, "\phi"]\\
&&\\
T \arrow[uu, bend left=40,"P"] \arrow[uu, bend left=90,"Q"]&& 
\end{tikzcd}
\]

\subsection{On the quadruple}\label{sec:5.1}

To prove Theorem \ref{thm:main}, we need to establish Theorem \ref{thm:main2}, which necessitates certain preparatory setups and lemmas covered in the following sections: \S\ref{sec:5.1},\S\ref{sec:5.2}, and \S\ref{sec:5.3}.

The relevant coupled Prym-Petri map for Theorem \ref{thm:main2} can be described as follows. Let $V_{\mathbf{a},\mathbf{b}}^r(\mathscr{C}_\eta, P_\eta,Q_\eta)$ be the two-pointed Prym-Brill Noether loci associated to $\mathscr{C}_\eta$ with two points $P_\eta$ and $Q_\eta$. We consider $\mathscr{L}_\eta\in V_{\mathbf{a},\mathbf{b}}^r(\mathscr{C}_\eta,P_\eta,Q_\eta)$ and $\mathscr{M}_\eta:=\iota^*\mathscr{L}_\eta$. Here one may read $\mathscr{M}_\eta$ as $K_{\widetilde{\mathscr{C}}_\eta}\otimes\mathscr{L}_\eta^\vee$. 
Suppose 
$h^0(\widetilde{\mathscr{C}}_\eta,\mathscr{L}_\eta(-a_iP_\eta-b_iQ_\eta))=r+1-i$ for each $i$, and take sections 
\[
\sigma_{i}\in\widetilde{\phi}_*\mathscr{L}_\eta(-a_iP_\eta-b_iQ_\eta)\backslash  \widetilde{\phi}_*\mathscr{L}_\eta(-a_{i+1}P_\eta-b_{i+1}Q_\eta)\;\text{for}\;0\leq i\leq r-1
\]
and $\sigma_{r}\in \widetilde{\phi}_*\mathscr{L}_\eta(-a_{r}P_\eta-b_{r}Q_\eta)$.
As before, we have the composition
\begin{equation}\label{eqn:submo}
\iota^*\widetilde{\phi}_*\mathscr{L}_\eta(-a_iP_\eta-b_iQ_\eta)\hookrightarrow \iota^*\widetilde{\phi}_*\mathscr{L}_\eta\xrightarrow{\cong}\widetilde{\phi}_*\mathscr{M}_\eta \hookrightarrow \widetilde{\phi}_*\mathscr{M}_\eta(a_iP_\eta+b_iQ_\eta).
\end{equation}
The coupled Prym-Petri map for $\mathscr{L}_\eta$ becomes 
\begin{equation}\label{eqn:varmu}
\overline{\mu}_\eta:\displaystyle\bigoplus_{i=0}^r\langle \sigma_{i}\rangle \otimes \widetilde{\phi}_*\mathscr{M}_\eta(a_iP_\eta+b_iQ_\eta)/\iota^*\widetilde{\phi}_*\mathscr{L}_\eta(-a_iP_\eta-b_iQ_\eta)\rightarrow \widetilde{\phi}_*(K_{\mathscr{C}_\eta}\otimes \epsilon).
\end{equation}

The map $\overline{\mu}_\eta$ can be extended to a map over $T$. 
Since the map $\overline{\mu}_\eta$ is defined on a subspace of
\[
\displaystyle\bigoplus_{i=0}^r\langle \sigma_{i}\rangle \otimes \widetilde{\phi}_*\mathscr{M}_\eta(a_iP_\eta+b_iQ_\eta)\subseteq\displaystyle\bigoplus_{i=0}^r\widetilde{\phi}_*\mathscr{L}_\eta(-a_iP_\eta-b_iQ_\eta) \otimes \widetilde{\phi}_*\mathscr{M}_\eta(a_iP_\eta+b_iQ_\eta),
\]
we extend the line bundles $\mathscr{L}_\eta, \mathscr{L}_\eta(-a_iP_\eta-b_iQ_\eta)$ and $\mathscr{M}_\eta(a_iP_\eta+b_iQ_\eta)$ over $\widetilde{\mathscr{C}}$.

As in \cite[(2.7)]{Wel85}, following possible base extension and minimally resolving the singularities, one can extend the line bundle $\mathscr{L}_\eta$ on $\widetilde{\mathscr{C}}_\eta$ to a line bundle $\mathscr{L}$ on $\widetilde{\mathscr{C}}$. Here $\mathrm{Nm}(\mathscr{L})$ is in fact given by $K_{\mathscr{C}/T}$ twisted by a line bundle associated to a linear combination of the components of $\mathscr{C}_0$. We may assume that the component $E_g$ is not included in the linear combination, since the Picard group of $T$ is trivial so that $\mathscr{O}_{\mathscr{C}}(\mathscr{C}_0)\cong\mathscr{O}_{\mathscr{C}}$. Due to $\mathrm{Nm}(\mathscr{O}_{\widetilde{C}}(E_i'))=\mathrm{Nm}(\mathscr{\widetilde{C}}(E_i''))\cong\mathscr{C}(E_i)$ for $1\leq i\leq g-1$ and similarly for all the rational components of $\widetilde{\mathscr{C}}_0$, after a suitable twist, we can assume $\mathrm{Nm}(\mathscr{L})\cong K_{\mathscr{C}/T}$ from this point forward.

 Let $Y$ be any component of $\widetilde{\mathscr{C}}_0$. By using the theory of limit linear series \cite{EH83}, the line bundle $\mathscr{L}_\eta$ over $\widetilde{\mathscr{C}}_\eta$ can be extended to a line bundle $\mathscr{L}_Y$ over $\widetilde{\mathscr{C}}$.
 This extension involves twisting a line bundle $\mathscr{L}$ on $\widetilde{\mathscr{C}}$ with an appropriate linear combination of the components of $\widetilde{\mathscr{C}}_0$. The extended line bundle $\mathscr{L}_Y$ has degree $0$ on the components of the special fiber $\widetilde{\mathscr{C}}_0$ apart from $Y$. Similarly, we have line bundles over $\mathscr{C}$ that are extensions of line bundles on $\mathscr{C}_\eta$. Further we observe that  $\iota^*(\mathscr{L}_Y)\cong (\iota^*\mathscr{L})_{\iota(Y)}$ and $\mathrm{Nm}(\mathscr{L}_Y)\cong\mathrm{Nm}(\mathscr{L})_{\pi(Y)}$ as in \cite[pg. 677]{Wel85}.

 Additionally, as an analogous result from \cite[p.11]{Tar} for the pointed case, we have extensions for the two-pointed cases as follows. For each $i$, we consider the extensions $\mathscr{L}_{\widetilde{E}_g}^i$ of $\mathscr{L}_\eta(-a_iP_\eta-b_iQ_\eta)$, $\mathscr{M}_{\widetilde{E}_g}^i$ of $\mathscr{M}_\eta(a_iP_\eta+b_iQ_\eta)$, $\mathscr{L}_{\widetilde{E}_g}$ of $\mathscr{L}_\eta$, and $\mathscr{M}_{\widetilde{E}_g}$ of $\mathscr{M}_\eta$ over $\widetilde{\mathscr{C}}$ such that those bundles have degree $0$ on all the components of $\widetilde{\mathscr{C}}_0$ other than $\widetilde{E}_g$. Then we have inclusions
\begin{align*}
\widetilde{\phi}_*\mathscr{L}_{\widetilde{E}_g}^i\hookrightarrow \widetilde{\phi}_*\mathscr{L}_\eta(-a_iP_\eta-b_iQ_\eta)\quad\text{and}\quad \widetilde{\phi}_*\mathscr{M}_{\widetilde{E}_g}^i\hookrightarrow \widetilde{\phi}_*\mathscr{M}_\eta(a_iP_\eta+b_iQ_\eta)
\end{align*}
of $\mathscr{O}_T$-submodules.
Especially, given the inclusion
\[
\widetilde{\phi}_*\mathscr{L}_\eta(-a_{i+1}P_\eta-b_{i+1}Q_\eta)\subseteq\widetilde{\phi}_*\mathscr{L}_\eta(-a_iP_\eta-b_iQ_\eta),
\] 
there are integers $\alpha_{i}$ such that the conditions
\begin{align*}
t^{\alpha_{i}}\sigma_{i}&\in \widetilde{\phi}_*\mathscr{L}_{\widetilde{E}_g}^i\backslash\widetilde{\phi}_*\mathscr{L}_{\widetilde{E}_g}^{i+1}\quad\text{for $i=0,\ldots,r-1$}
\end{align*}
and $t^{\alpha_{r}}\sigma_{r}\in \widetilde{\phi}_*\mathscr{L}_{\widetilde{E}_g}^r$ are satisfied on $\widetilde{E}_g$ as in \cite[Lemma 1.2]{EH83}. We note that $\{t^{\alpha_i}\sigma_{i}\}_{i}$ forms a basis of $\widetilde{\phi}_*\mathscr{L}_{\widetilde{E}_g}^0$. 
We let 
\[
\widehat{\sigma}_{i}:=t^{\alpha_{i}}\sigma_{i}\quad\text{for all $i$}.
\]
Then, from the product of sections, one can deduce the map
\[
\overline{\mu}:\displaystyle\bigoplus_{i}\langle \widehat{\sigma}_{i}\rangle \otimes \widetilde{\phi}_*\mathscr{M}_{\widetilde{E}_g}^i/\iota^*\widetilde{\phi}_*\mathscr{L}_{\widetilde{E}_g}^i\rightarrow \widetilde{\phi}_*(K_{\mathscr{C}_\eta}\otimes \epsilon)
\]
and this provides \eqref{eqn:varmu} over $\eta$. In particular, we have the following inclusion
\begin{equation}\label{inc}
\iota^*\widetilde{\phi}_*\mathscr{L}_{\widetilde{E}_g}^i\hookrightarrow \widetilde{\phi}_*\mathscr{M}_{\widetilde{E}_g}^i
\end{equation}
of $\mathscr{O}_T$-submodules obtained by (\refeq{eqn:submo}).

\subsection{The Kernel of the coupled Prym-Petri map}\label{sec:5.2}
In this section, we would like to investigate a particular situation: Given $\rho\in \mathrm{ker}(\overline{\mu}_\eta)$, we assume $\rho\neq 0$. Then there is a unique integer $\gamma$ satisfying
\[
t^\gamma\rho\in\displaystyle\bigoplus_{i}\langle \widehat{\sigma}_{i}\rangle\otimes\widetilde{\phi}_*\mathscr{M}_{\widetilde{E}_g}^i\backslash t\left(\langle \widehat{\sigma}_{i}\rangle\otimes\widetilde{\phi}_*\mathscr{M}_{\widetilde{E}_g}^i\right).
\]
Indeed, $t^\gamma\rho$ lies in the kernel of $\overline{\mu}$, since the element $\rho$ is in the kernel of $\overline{\mu}_\eta$. For each $i$, we consider restrictions to
\begin{equation}\label{eqn:LM}
\begin{aligned}
L^{i}&:=\mathrm{Im}\left(\widetilde{\phi}_*\mathscr{L}_{\widetilde{E}_g}^i\rightarrow H^0\left(\mathscr{L}_{\widetilde{E}_g}^i\otimes\mathscr{O}_{\widetilde{E}_g}\right)\right),\\
M^{i}&:=\mathrm{Im}\left(\widetilde{\phi}_*\mathscr{M}_{\widetilde{E}_g}^i\rightarrow H^0\left(\mathscr{M}_{\widetilde{E}_g}^i\otimes\mathscr{O}_{\widetilde{E}_g}\right)\right).
\end{aligned}
\end{equation}

We denote by $\overline{\sigma}_{i}\in L^{i}$ the image of $\widehat{\sigma}_{i}$ under the restriction $\widetilde{\phi}_*\mathscr{L}_{\widetilde{E}_g}^i\rightarrow L^{i}$, and $\overline{\rho}$ the image of $t^\gamma\rho$ under the map
\[
\displaystyle\bigoplus_{i}\langle \widehat{\sigma}_{i}\rangle\otimes\widetilde{\phi}_*\mathscr{M}_{\widetilde{E}_g}^i\rightarrow\displaystyle\bigoplus_{i}\langle\overline{\sigma}_{i}\rangle\otimes M^{i}.
\] 
We know from the assumption that 
\begin{equation}\label{eqn:neq}
\overline{\rho}\neq 0\quad\text{in}\quad\displaystyle\bigoplus_{i}\langle\overline{\sigma}_{i}\rangle\otimes M^{i}/\iota^*L^{i},
\end{equation}
which is deduced by the inclusion $\iota^*L^{i}\hookrightarrow M^{i}$ via \eqref{inc}.

Before establishing the proof of Lemma \ref{lem:ine} which concerns the inequality involving the sum of the orders of $\overline{\rho}$ at $P_g'$ and $Q_g'$ (resp. $P_g''$ and $Q_g''$), we need Lemma \ref{lem:ord}. 
Let $X$ be a smooth surface and $\Phi:X\rightarrow T=\mathrm{Spec}(\mathcal{O})$ be a flat projective family. Here $\mathcal{O}$ is a discrete valuation ring whose parameter is $t$. Let $X_0$ is the fiber over a special point $t=0$.

\begin{lemma}\label{lem:ord}

Let $Y$ and $Z$ be two components of $X_0$ such that $Y$ and $Z$ meet at $p$. Let $q_1,q_2,\ldots,q_s$ be points on $Y$ for some $s$, where $q_i\neq p$. If $\alpha$ is the unique integer such that 
\[
t^\alpha\sigma\in\Phi_*\mathscr{L}_Z\backslash t\Phi_*\mathscr{L}_Z
\]
for a line bundle $\mathscr{L}_Z$ on $Z$, then 
\[
\mathrm{ord}_{q_1}(\sigma|_Y)+\ldots+\mathrm{ord}_{q_j}(\sigma|_Y)\leq \alpha
\]
for $1\leq j\leq s$.
\end{lemma}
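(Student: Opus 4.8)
The plan is to transfer the statement into intersection theory on the smooth surface $X$ and read off the orders of vanishing of $\sigma|_Y$ from a single effective divisor. First I would note that since each $\mathrm{ord}_{q_i}(\sigma|_Y)\ge 0$, it suffices to treat the extreme case $j=s$; the inequalities for $j<s$ then follow automatically, being partial sums of nonnegative terms. Using the defining property of $\alpha$, I extend $\sigma$ to the regular section $\tau:=t^\alpha\sigma\in\Phi_*\mathscr{L}_Z$, which by minimality is not divisible by $t$ and hence does not vanish identically on the special fibre $X_0$. Writing $e:=\mathrm{ord}_Y(\tau)$ for the multiplicity of $\tau$ along $Y$ and factoring $\tau=s_Y^{\,e}\,\tau'$, where $s_Y$ is the canonical section of $\mathcal{O}_X(Y)$, the section $\tau'$ is regular with $\tau'|_Y\not\equiv 0$; this restriction is the natural meaning of $\sigma|_Y$, now a nonzero section of $\mathscr{L}_Z(-eY)|_Y$.

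The key numerical input is the intersection number $D'\cdot Y$, where $D':=\mathrm{div}_X(\tau')$ is effective. Since $X_0$ is a fibre one has $Y\cdot X_0=0$ and $Y\cdot Z=1$, so $Y^2=-1$, while $\deg(\mathscr{L}_Z|_Y)=0$ because the degree of $\mathscr{L}_Z$ is concentrated on $Z$. These give $D'\cdot Y=\deg(\mathscr{L}_Z(-eY)|_Y)=0-e(Y^2)=e$. As $\tau'$ has no component in common with $Y$, the divisor $D'$ meets $Y$ in finitely many points, and at each smooth point $q_i\neq p$ the local intersection multiplicity $(D'\cdot Y)_{q_i}$ equals $\mathrm{ord}_{q_i}(\sigma|_Y)$. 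Effectivity of $D'$ then yields the preliminary bound $\sum_i \mathrm{ord}_{q_i}(\sigma|_Y)\le D'\cdot Y=e$.

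The remaining and genuinely delicate point is to sharpen $e$ to $\alpha$ by accounting for the vanishing forced at the node $p$. Here I would localize at $p$, choose coordinates $x,y$ with $t=xy$, $Y=\{y=0\}$, $Z=\{x=0\}$, and expand $\tau$ along the two branches exactly as in \cite[Lemma 1.2]{EH83}. The minimality defining $\alpha$ prevents $\tau$ from being further divisible by $t$ at $p$, which forces the $Z$-aspect of $\tau$ to survive; comparing the two branch expansions should then show that the excess vanishing of the $Y$-aspect is concentrated at the node to order at least $e-\alpha$, i.e. $(D'\cdot Y)_p\ge e-\alpha$. Subtracting this contribution gives $\sum_i \mathrm{ord}_{q_i}(\sigma|_Y)\le (D'\cdot Y)-(D'\cdot Y)_p\le e-(e-\alpha)=\alpha$, which is the claim.

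I expect the node estimate $(D'\cdot Y)_p\ge e-\alpha$ to be the main obstacle: it is the single step that really uses the minimality of $\alpha$ and the compatibility of the two branches at $p$, rather than formal properties of divisors on $X$. Everything else—the self-intersection $Y^2=-1$, the concentration of $\deg\mathscr{L}_Z$ on $Z$, and the identification of local intersection multiplicities with orders of vanishing of $\sigma|_Y$—is routine once $\tau=t^\alpha\sigma$ is in hand. Finally, the same argument applies verbatim with $Y$ replaced by the other branch through $p$, which is precisely what will be needed for the paired orders in Lemma \ref{lem:ine}.
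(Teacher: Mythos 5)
There is a genuine gap here --- in fact two. First, your deduction $Y^2=-1$ from $Y\cdot X_0=0$ and $Y\cdot Z=1$ silently assumes that $Z$ is the only component of $X_0$ meeting $Y$ (and that they meet only at $p$). The lemma does not assume this, and in the situation where it is applied (Lemma \ref{lem:ine}) it is false: the special fibre is a chain, so the relevant component $Y$ meets two neighbouring components (the marked points are themselves nodes), giving $Y^2=-Y\cdot(X_0-Y)=-2$. Your identity $D'\cdot Y=\deg\bigl(\mathscr{L}_Z(-eY)|_Y\bigr)=e$, and all the numerical bookkeeping built on it, collapses in that case. Second, and more fundamentally, the node estimate $(D'\cdot Y)_p\ge e-\alpha$ is never proved; you explicitly defer it (``comparing the two branch expansions should then show\ldots''). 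But that estimate is precisely the content of the lemma: it is the only step where the minimality of $\alpha$ (that $t^\alpha\sigma\notin t\Phi_*\mathscr{L}_Z$) and the comparison between the two extensions of the generic line bundle enter. A proof whose self-identified ``main obstacle'' is left as an expectation is not a proof.

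For contrast, the paper's argument does not use intersection theory on the surface at all. It works with the extension $\mathscr{L}_Y$ whose degree is concentrated on $Y$: since $\sigma|_Y$ is a nonzero section of $\mathscr{L}_Y|_Y$, its total vanishing decomposes as $\deg\mathscr{L}_Y|_Y=\mathrm{ord}_{q_1}(\sigma|_Y)+\cdots+\mathrm{ord}_{q_j}(\sigma|_Y)+\mathrm{ord}_p(\sigma|_Y)+\sum_{q}\mathrm{ord}_q(\sigma|_Y)$, the last sum running over the remaining points of $Y$, and the relation imported from \cite[Proof of Proposition 1.1]{EH83} is $\alpha\ge\deg\mathscr{L}_Y|_Y-\mathrm{ord}_p(\sigma|_Y)$; nonnegativity of all the remaining orders then gives the claim for every $j$ at once. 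If you wish to complete your route, note that your postponed inequality is essentially a reformulation of this Eisenbud--Harris relation, so you would have to carry out the local analysis at $p$ (with $t=xy$, $Y=\{y=0\}$, $Z=\{x=0\}$) yourself and, in addition, account for the vertical components of $D'$ passing through the \emph{other} nodes of $Y$ --- that is, redo the chain induction of \cite{EH83} rather than appeal to it.
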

\begin{proof}
It follows \cite[Proof of Proposition 1.1]{EH83} with 
\[
\mathrm{deg}\;\mathscr{L}_Y|_Y=\mathrm{ord}_{q_1}(\sigma|_Y)+\ldots+\mathrm{ord}_{q_j}(\sigma|_Y)+\mathrm{ord}_{p}(\sigma|_Y)+\sum_{q\in Y\backslash\{p,q_1,\ldots,q_j\}}\mathrm{ord}_q(\sigma|_Y).
\]
\end{proof}

Now, we are ready to show the following lemma. Let $P_g'$ and $Q_g'$ be intersection points of $\widetilde{E}_g$ with the adjacent connected components, and let $P_g''$ and $Q_g''$ be the images of $P_g'$ and $Q_g'$ under the involution $\iota$ respectively, as in Pig. \ref{fig:1}.

\begin{lemma}\label{lem:ine}
If $\rho\in \mathrm{ker}(\overline{\mu}_\eta)$, then we have
\begin{equation}\label{eqn:ord}
\mathrm{ord}_{P_g'}(\overline{\rho})+\mathrm{ord}_{Q_g'}(\overline{\rho})\geq 2g-2\quad\text{and}\quad\mathrm{ord}_{P_g''}(\overline{\rho})+\mathrm{ord}_{Q_g''}(\overline{\rho})\geq 2g-2.
\end{equation}
\end{lemma}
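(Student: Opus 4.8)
The plan is to reduce the whole statement to the geometry of the elliptic tail $\widetilde{E}_g$ together with its quotient $E_g$. Interpreting $\mathrm{ord}_{P_g'}(\overline{\rho})$ as the vanishing order at $P_g'$ of the image section, I would first write $\overline{\rho}=\sum_i\overline{\sigma}_i\otimes\tau_i$ and record that $s:=\mu(\overline{\rho})=\sum_i\overline{\sigma}_i\cdot\tau_i$ is a section on $\widetilde{E}_g$ of the line bundle $N:=(\mathscr{L}^i_{\widetilde{E}_g}\otimes\mathscr{M}^i_{\widetilde{E}_g})|_{\widetilde{E}_g}$. This bundle is independent of $i$, because $\mathscr{M}^i=K_{\widetilde{\mathscr{C}}}\otimes(\mathscr{L}^i)^\vee$ makes the twists by $a_iP+b_iQ$ cancel, so $N$ is the restriction of the canonical-type extension concentrated on $\widetilde{E}_g$ and $\deg N=4g-4$. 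The hypothesis $\rho\in\mathrm{ker}(\overline{\mu}_\eta)$ gives $t^\gamma\rho\in\mathrm{ker}(\overline{\mu})$ and hence $p(s)=0$, i.e. $s$ is $\iota$-invariant; since $\widetilde{E}_g\rightarrow E_g$ is the \'etale double cover cut out by the $2$-torsion point from \S\ref{sec5}, we have $s=\pi^*\bar{s}$ for a section $\bar{s}\in H^0(E_g,N_0)$ with $\pi^*N_0=N$, $\deg N_0=2g-2$, and $\bar{s}\neq 0$ by \eqref{eqn:neq}. As $\iota$ interchanges $P_g'\leftrightarrow P_g''$ and $Q_g'\leftrightarrow Q_g''$, I get $\mathrm{ord}_{P_g'}(\overline{\rho})=\mathrm{ord}_{\bar{P}_g}(\bar{s})=\mathrm{ord}_{P_g''}(\overline{\rho})$ and likewise for $Q$, so the two inequalities in \eqref{eqn:ord} are equivalent and it suffices to prove $\mathrm{ord}_{\bar{P}_g}(\bar{s})+\mathrm{ord}_{\bar{Q}_g}(\bar{s})\geq 2g-2$.

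Since $\deg N_0=2g-2$ is exactly the total vanishing of the nonzero section $\bar{s}$ on the elliptic curve $E_g$, this inequality is equivalent to the assertion that $\bar{s}$ has no zeros away from the two nodes $\bar{P}_g,\bar{Q}_g$. To establish this I would apply Lemma \ref{lem:ord} on the components of $\widetilde{\mathscr{C}}_0$ adjacent to $\widetilde{E}_g$ along $P_g'$ and $Q_g'$: the integers $\alpha_i$ governing the orders of the $\widehat{\sigma}_i$, and the corresponding data for the $\mathscr{M}^i_{\widetilde{E}_g}$-sections, bound from above the vanishing of the relevant limit sections at all points other than the chosen attaching node, while the degree identity used in the proof of Lemma \ref{lem:ord} (following \cite[Proof of Proposition 1.1]{EH83}) redistributes the remaining degree onto $P_g'$ and $Q_g'$. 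Because the factors $\overline{\sigma}_i$ and $\tau_i$ live in complementary bundles related by $\mathscr{M}^i=K_{\widetilde{\mathscr{C}}}\otimes(\mathscr{L}^i)^\vee$, their node vanishing is complementary and the $a_i+b_i$ contributions cancel in the product; summing the node contributions at $P_g'$ and $Q_g'$ then recovers the total $2g-2=\deg N_0$, which forces $\bar{s}$ to be supported at the two nodes.

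The main obstacle is the bookkeeping imposed by the involution together with the fact that $\widetilde{E}_g$ is attached to the rest of $\widetilde{\mathscr{C}}_0$ at the four points $P_g',Q_g',P_g'',Q_g''$, whereas Lemma \ref{lem:ord} controls vanishing only away from one chosen node at a time, and only through upper bounds. Converting these upper bounds on interior and complementary-node vanishing into the required lower bound $2g-2$ for the node vanishing of $\bar{s}$, without leaking degree to cancellation inside the sum $s=\sum_i\overline{\sigma}_i\tau_i$, is the delicate point; this is precisely where the descent $s=\pi^*\bar{s}$, which collapses the four nodes of $\widetilde{E}_g$ to the two nodes of $E_g$, and the $\iota$-equivariance of the extensions recorded in \eqref{inc} become essential, exactly as in the unpointed computation of \cite[p.~677]{Wel85} and its pointed refinement in \cite{Tar}. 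Once $\bar{s}$ is shown to vanish only at $\bar{P}_g$ and $\bar{Q}_g$, the inequalities \eqref{eqn:ord} follow at once from $\deg N_0=2g-2$.
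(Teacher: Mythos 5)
There is a genuine gap, and it starts with the interpretation of the statement. In the paper, the quantity $\mathrm{ord}_{P_g'}(\overline{\rho})+\mathrm{ord}_{Q_g'}(\overline{\rho})\geq 2g-2$ is \emph{defined} (in the sentence immediately following the lemma) to mean that $\overline{\rho}$ is a linear combination of decomposable tensors $\overline{\sigma}_i\otimes\overline{\tau}_j$ each of whose factors have vanishing orders summing to at least $2g-2$; the element $\overline{\rho}$ lives in $\bigoplus_i\langle\overline{\sigma}_i\rangle\otimes M^i/\iota^*L^i$, not in a space of sections, so this term-by-term convention is the only sense in which ``$\mathrm{ord}(\overline{\rho})$'' is meaningful. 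You instead interpret it as the vanishing order of the product section $s=\mu(\overline{\rho})=\sum_i\overline{\sigma}_i\cdot\tau_i$. These are not equivalent: the product section can vanish to high order at the nodes through cancellation among the summands even when individual terms $\overline{\sigma}_i\otimes\overline{\tau}_j$ do not satisfy the order condition. The term-by-term statement is exactly what the proof of Lemma \ref{lem:M} consumes (it expands each $\overline{\rho}_i$ in a basis $\upsilon_h\otimes\omega_m$ and kills coefficients $z_{h,m}$ violating the order bounds), so proving the product-section version, even rigorously, would not establish the lemma as it is used. You flag this cancellation issue yourself as ``the delicate point,'' but deferring it to the descent $s=\pi^*\bar{s}$ and to $\iota$-equivariance does not resolve it: descent says nothing about how the total vanishing distributes among the tensor summands.

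The second gap is that your central geometric claim --- that $\bar{s}$ has no zeros on $E_g$ away from the two nodes --- is asserted, not proved, and it cannot be proved by working on the tail alone. Lemma \ref{lem:ord} gives \emph{upper} bounds on sums of orders of a single section $\sigma|_Y$ in terms of the twisting integer $\alpha$; it does not exclude interior zeros of a product of limit sections on $\widetilde{E}_g$. In the paper the bound $2g-2$ is not a degree count on the tail at all: it is the accumulation of the increment inequality \eqref{eqn:inq}, which says the sum of vanishing orders of (the relevant limit of) $\rho$ at the nodes grows by at least $2$ as one passes each elliptic component $E_k'$ of the chain, $k=2,\ldots,g$. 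That induction is where the hypotheses on the degeneration enter --- in particular the non-torsion condition $P_k-Q_k\in\mathrm{Pic}^0(E_k)$, and the fact (from \cite[(2.8)]{Wel85}) that an elliptic curve carries at most one section, up to scalar, vanishing only at the two nodes --- and it is precisely the mechanism that rules out zeros ``leaking'' into the interior of the components and controls cancellation term by term. Your reduction to the tail discards this global chain structure, which is the actual engine of the proof; the coincidence that $\deg N_0=2g-2$ equals the target bound is what makes the shortcut look plausible, but the two occurrences of $2g-2$ have different origins (degree of the tail bundle versus $2(g-1)$ increments along the chain), and only the second yields the lemma.
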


We say the relation \eqref{eqn:ord} on $\overline{\rho}$ if and only if $\overline{\rho}$ is a linear combination of elements of the form $\overline{\sigma}_i\otimes\overline{\tau}_j$ for $\overline{\tau}_j\in M^i$ where $\mathrm{ord}_{P_g'}(\overline{\sigma}_i)+\mathrm{ord}_{Q_g'}(\overline{\sigma}_i)+\mathrm{ord}_{P_g'}(\overline{\tau}_j)+\mathrm{ord}_{Q_g'}(\overline{\tau}_j)\geq 2g-2$ for all $i,j$.

We borrow some results in \cite[Proof of 3.2]{CHT},\cite[\S 3]{EH83}, and \cite[p. 679]{Wel85} along with \cite[Proof of Lemma 3.4]{Tar} to verify Lemma \ref{lem:ine}.

\begin{proof}[Proof of Lemma \ref{lem:ine}]
Let $1\leq k\leq g-1$. Let $P_k'$ and $Q_k'$ be the intersection points where $E_k'$ meets the adjacent components for $k=1,\ldots,g-1$, and the connected components meeting $E_k'$ at $P_k'$ contains the points $P_0'$ and $Q_0'$, as described in Fig. \ref{fig:1}. 

By the same argument for \cite[Lemma 1.2]{EH83}, it can be assumed that there is a suitable power $\alpha_{i,k}$ so that
\begin{equation}\label{eqn:al}
t^{\alpha_{i,k}}\sigma_i\in\widetilde{\phi}_*\mathscr{L}_{{E}_k'}(-a_iP_k'-b_iQ_k')\backslash\widetilde{\phi}_*\mathscr{L}_{{E}_k'}(-a_{i+1}P_k'-b_{i+1}Q_k')
\end{equation}
for $i=0,\ldots,r-1$, and $t^{\alpha_{r}}\sigma_{r}\in \widetilde{\phi}_*\mathscr{L}_{{E}_k'}(-a_rP_k'-b_rQ_k')$ on ${E}_k'$. For $0\leq i\leq r$, we let $\widehat{\sigma}_{i,k}:=t^{\alpha_{i,k}}\sigma_i$. Subsequently, there is a unique integer $\gamma_k$ for each $k$ such that 
\[
t^{\gamma_k}\rho\in\bigoplus_{i=0}^r\left(\langle\widehat{\sigma}_{i,k}\rangle\otimes\widetilde{\phi}_*\mathscr{M}_{{E}_k'}(a_iP_k'+b_iQ_k')\backslash t\left(\langle\widehat{\sigma}_{i,k}\rangle\otimes\widetilde{\phi}_*\mathscr{M}_{{E}_k'}(a_iP_k'+b_iQ_k')\right)\right).
\]
In particular, for $k=g$, we use $E_g', \widehat{\sigma}_{i,g}$ and $\gamma_g$ to indicate $\widetilde{E}_g,\widehat{\sigma}_i$, and $\gamma$ respectively. 

We modify some arguments for the unpointed case in \cite[Proof of Proposition 1.1, pp. 277-280]{EH83}, and the pointed case in \cite[Proof of 3.2]{CHT} as well as \cite[Proof of Lemma 3.3]{Tar}.

To be precise, by the construction of $\mathbf{a}$ and $\mathbf{b}$ (via (\ref{a},~\ref{b},~\ref{c})), \eqref{eqn:al} implies that for each $i$, there exists $\alpha_{i,k}$ that is $\alpha_{i,k}^{Q_k'}$ or $\alpha_{i,k}^{P_k'}$ such that either
\begin{align*}
t^{\alpha_{i,k}^{Q_k'}}\sigma_i&\in\widetilde{\phi}_*\mathscr{L}_{{E}_k'}(-a_u'P_k'-b_u'Q_k')\backslash\widetilde{\phi}_*\mathscr{L}_{{E}_k'}(-a_{u}'P_k'-b_{u+1}'Q_k')\quad\text{or}\\
t^{\alpha_{i,k}^{P_k'}}\sigma_i&\in\widetilde{\phi}_*\mathscr{L}_{{E}_k'}(-a_u'P_k'-b_u'Q_k')\backslash\widetilde{\phi}_*\mathscr{L}_{{E}_k'}(-a_{u+1}'P_k'-b_{u}'Q_k')
\end{align*}
for some $u:=u(i)\in\{0,\ldots,r\}$. Hence, $\left\{\alpha_{i,k}^{Q_k'}\right\}_{i\in I}\bigcup\left\{\alpha_{j,k}^{Q_k'}\right\}_{j\in J}$ with $I\sqcup J=\{0,\ldots,r\}$ is a basis of $\widetilde{\phi}_*\mathscr{L}_{{E}_k'}(-a_0P_k'-b_0Q_k')$.
Then there exists either $\gamma_{i,k}^{Q_k'}$ or $\gamma_{i,k}^{P_k'}$ such that we have 
\begin{align}
t^{\gamma_{i,k}^{Q_k'}}\rho_i&\in \langle\widehat{\sigma}_{i,k}\rangle\otimes\widetilde{\phi}_*\mathscr{M}_{{E}_k'}(a_u'P_k'+b_u'Q_k')\backslash t\left(\langle\widehat{\sigma}_{i,k}\rangle\otimes\widetilde{\phi}_*\mathscr{M}_{{E}_k'}(a_u'P_k'+b_u'Q_k')\right),\;\text{or}\label{eqn:aa}\\
t^{\gamma_{i,k}^{P_k'}}\rho_i&\in \langle\widehat{\sigma}_{i,k}\rangle\otimes\widetilde{\phi}_*\mathscr{M}_{{E}_k'}(a_u'P_k'+b_u'Q_k')\backslash t\left(\langle\widehat{\sigma}_{i,k}\rangle\otimes\widetilde{\phi}_*\mathscr{M}_{{E}_k'}(a_u'P_k'+b_u'Q_k')\right)\label{eqn:bb}.
\end{align}
Thus, we set $\gamma_{k}=\mathrm{max}_{i,j}\left\{\gamma_{i,k}^{Q_k'},\gamma_{j,k}^{P_k'}\right\}$. Also, for the first case \eqref{eqn:aa}, $t^\gamma\rho=0$ if $\gamma>\gamma_{i,k}^{Q_k'}$ and for \eqref{eqn:bb}, $t^\gamma\rho=0$ if $\gamma>\gamma_{i,k}^{P_k'}$. This enable us to have
\[
\gamma_k\leq \mathrm{ord}_{P_k'}(t^{\gamma_k}\rho|_{E_k'})+\mathrm{ord}_{Q_k'}(t^{\gamma_k}\rho|_{E_k'}).
\]

Furthermore, Lemma \ref{lem:ord} gives either
\begin{equation}\label{eqn:eqlor}
\begin{aligned}
\mathrm{ord}_{P_{k-1}'}(\sigma_{i}|_{E_{k-1}'})+\mathrm{ord}_{Q_{k-1}'}(\sigma_{i}|_{E_{k-1}'})&\leq \alpha_{i,k}^{P_k'}\quad\text{or}\\
\mathrm{ord}_{P_{k-1}'}(\sigma_{i}|_{E_{k-1}'})+\mathrm{ord}_{Q_{k-1}'}(\sigma_{i}|_{E_{k-1}'})&\leq \alpha_{i,k}^{Q_k'}
\end{aligned}
\end{equation}
for each $i$. Following the argument of \cite[Lemma 3.2]{EH83} with its proof as well as the proof of \cite[Proposition 3.1]{EH83}, we obtain
\[
\mathrm{ord}_{P_{k-1}'}(\rho_{i}|_{E_{k-1}'})+\mathrm{ord}_{Q_{k-1}'}(\rho_{i}|_{E_{k-1}'})\leq \gamma_k.
\]

In particular, if equality holds on \eqref{eqn:eqlor}, $\sigma_i$ vanishes on $E_{k-1}'$ only at $P_{k-1}'$ and $Q_{k-1}'$. Then by \cite[(2.8)]{Wel85}, there is at most one section of degree $\alpha_{i,k}$ vanishing only at $P_{k-1}'$ and $Q_{k-1}'$ up to scalars in $E_{k-1}'$. Hence, with similar arguments as in \cite[pp. 279-280]{EH83}, the assumption $\rho\in \mathrm{ker}(\overline{\mu}_\eta)$ gives rise to
\begin{equation}\label{eqn:inq}
\mathrm{ord}_{P_{k}'}(t^{\gamma_{k+1}}\rho|_{E_{k}'})+\mathrm{ord}_{Q_{k}'}(t^{\gamma_{k}}\rho|_{E_{k}'})\geq\mathrm{ord}_{P_{k-1}'}(t^{\gamma_{k-1}}\rho|_{E_{k-1}'})+\mathrm{ord}_{Q_{k-1}'}(t^{\gamma_{k-1}}\rho|_{E_{k-1}'})+2
\end{equation}
for $k=2,\ldots,g$. As mentioned in \cite[Proof of Lemma 3.4]{Tar} as well as \cite{Wel85}, the arguments presented in \cite{EH83,CHT} can be applied to the families of curves with special fibers consisting of a chain of rational and elliptic curves. So, our choice of points where the difference of any of two nodal points in $E_{k}'$ is non-torsion in $\mathrm{Jac}(E_k')$ enables us to have the inequality \eqref{eqn:inq}.

Thus, from \eqref{eqn:inq} for $k=2,\ldots,g$, we finally obtain the statement on $P_g'$ and $Q_g'$. Similarly, we have the other statement about $P_g''$ and $Q_g''$ as $P_l''$ and $Q_l''$ can be seen as $\iota(P_l')$ and $\iota(Q_l')$ with $E_l'':=\iota(E_l')$ for $l=1,\ldots,g$.
\end{proof}

We recall that the difference $P_g'-P_g''$ and $Q_g'-Q_g''$ are $2$-torsion points in $\mathrm{Jac}(\widetilde{E}_g)$. So, there is nothing we can achieve beyond \eqref{eqn:ord} from the above application of the argument presented in \cite{EH83,CHT,Tar}.

Lemma \ref{lem:ord} is analogous to \cite[Lemma 3.4]{Tar} for pointed case and \cite[(2.20)]{Wel85} for unpointed case. In the subsequent section, we provide a statement on the vanishing of $\overline{\rho}$, Lemma \ref{lem:M}.

\subsection{A vanishing statement}\label{sec:5.3}
For the points $P_g'$ and $Q_g'$ on $\widetilde{E}_g$, we have
\begin{equation}\label{eqn:lbi}
\begin{aligned}
\mathscr{L}_{\widetilde{E}_g}^i\otimes\mathscr{O}_{\widetilde{E}}&\cong\mathscr{L}_{\widetilde{E}_g}\otimes\mathscr{O}_{\widetilde{E}}(-a_iP'_\eta-b_iQ'_\eta),\;\text{and}\\
\mathscr{M}_{\widetilde{E}_g}^i\otimes\mathscr{O}_{\widetilde{E}}&\cong\mathscr{M}_{\widetilde{E}_g}\otimes\mathscr{O}_{\widetilde{E}}(a_iP'_\eta+b_iQ'_\eta)
\end{aligned}
\end{equation}
from $\mathscr{L}_\eta(-a_iP_\eta-b_iQ_\eta)$ and $\mathscr{M}_\eta(a_iP_\eta+b_iQ_\eta)$ discussed in \S\ref{sec:5.1}. It is shown in \cite[(2.21)]{Wel85} that both line bundles $\mathscr{L}_{\widetilde{E}_g}\otimes\mathscr{O}_{\widetilde{E}_g}$ and $\mathscr{M}_{\widetilde{E}_g}\otimes\mathscr{O}_{\widetilde{E}_g}$ exhibit isomorphism to either
\begin{equation}\label{eqn:ib}
\mathscr{O}_{\widetilde{E}_g}((2g-2)P_g')\quad\text{or}\quad\mathscr{O}_{\widetilde{E}_g}((2g-3)P_g'+P_g'').
\end{equation}
This is derived by the fact that these line bundles \eqref{eqn:ib} are different but have the same image as $\mathscr{O}_{E_g}((2g-2)P_g)$ under the $(2:1)$ norm map
\[
\mathrm{Nm}:\mathrm{Pic}^{2g-2}(\widetilde{E}_g)\rightarrow\mathrm{Pic}^{2g-2}(E_g),
\]
the line bundles \eqref{eqn:ib} are invariant under the involution $\iota$ of $\widetilde{E}_g$ from $2P_g'\equiv2P_g''$ in $\mathrm{Jac}(\widetilde{E}_g)$, and the isomorphisms
\[
\mathrm{Nm}(\mathscr{L}_{\widetilde{E}_g}\otimes\mathscr{O}_{\widetilde{E}_g})\cong\mathscr{O}_{E_g}((2g-2)P_g)\quad\text{and}\quad\mathrm{Nm}(\mathscr{M}_{\widetilde{E}_g}\otimes\mathscr{O}_{\widetilde{E}_g})\cong\mathscr{O}_{E_g}((2g-2)P_g).
\]
Putting together with \eqref{eqn:lbi}, we have the following lemma. 

\begin{lemma}\label{lem:M}
Let $\mathscr{L}_{\widetilde{E}_g}\otimes\mathscr{O}_{\widetilde{E}_g}$ and $\mathscr{M}_{\widetilde{E}_g}\otimes\mathscr{O}_{\widetilde{E}_g}$ be the line bundles both isomorphic either to one of two line bundles in \eqref{eqn:ib}. If
\[
\overline{\rho}\in\bigoplus_{i=0}^r\langle\overline{\sigma}_i\rangle\otimes M^i/\iota^*L^i
\] with the property \eqref{eqn:ord}, then $\overline{\rho}=0$.
\end{lemma}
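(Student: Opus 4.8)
The plan is to show that each generating term $\overline{\sigma}_i\otimes\overline{\tau}$ of $\overline{\rho}$ already lies in $\iota^*L^i$, so that it dies in the quotient $\langle\overline{\sigma}_i\rangle\otimes M^i/\iota^*L^i$; summing over $i$ then yields $\overline{\rho}=0$. We may assume $\overline{\sigma}_i\neq 0$. All of the action takes place on the elliptic tail $\widetilde{E}_g$: by \eqref{eqn:lbi} the twists $\mp(a_iP_g'+b_iQ_g')$ attached to $\mathscr{L}_{\widetilde{E}_g}^i$ and $\mathscr{M}_{\widetilde{E}_g}^i$ cancel, so $\overline{\sigma}_i$ and $\overline{\tau}$ become (meromorphic) sections of the two degree-$(2g-2)$ line bundles $\mathscr{L}_{\widetilde{E}_g}\otimes\mathscr{O}_{\widetilde{E}_g}$ and $\mathscr{M}_{\widetilde{E}_g}\otimes\mathscr{O}_{\widetilde{E}_g}$ of \eqref{eqn:ib}, with $\overline{\sigma}_i$ holomorphic and $\overline{\tau}$ having poles bounded by $a_iP_g'+b_iQ_g'$.

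First I would run a degree count. Adding the two inequalities of Lemma \ref{lem:ine} from \eqref{eqn:ord} — the one at $\{P_g',Q_g'\}$ and the one at $\{P_g'',Q_g''\}$ — gives that the combined vanishing orders of $\overline{\sigma}_i$ and $\overline{\tau}$ at the four points $P_g',Q_g',P_g'',Q_g''$ total at least $4g-4$. Since each of the two bundles has degree $2g-2$ on $\widetilde{E}_g$ and neither section has poles away from $P_g',Q_g'$, this lower bound is simultaneously an upper bound, hence an equality. Consequently all zeros of $\overline{\sigma}_i$ and all zeros and poles of $\overline{\tau}$ are concentrated at the four points, and both relations of \eqref{eqn:ord} hold with equality.

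Next I would exploit this rigidity through the group law of $\widetilde{E}_g$, taking $P_g'$ as origin. The divisor of $\overline{\sigma}_i$, now supported on $\{P_g',Q_g',P_g'',Q_g''\}$, is linearly equivalent to the bundle $\mathscr{L}_{\widetilde{E}_g}\otimes\mathscr{O}_{\widetilde{E}_g}$ of \eqref{eqn:ib}, which is supported only on $P_g',P_g''$. The crucial input is the dichotomy recorded after Lemma \ref{lem:ine}: $P_g'-P_g''$ and $Q_g'-Q_g''$ are $2$-torsion, whereas $Q_g'-P_g'$ is non-torsion. Writing out the Abel–Jacobi relation, the coefficient of the non-torsion class $Q_g'-P_g'$ must vanish, which forces the orders of $\overline{\sigma}_i$ at $Q_g'$ and $Q_g''$ to be zero. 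Compatibility with $\overline{\sigma}_i\in L^i$, which demands vanishing of order $\geq b_i$ at $Q_g'$, then forces $b_i=0$; if instead $b_i>0$ this is a contradiction, so no such nonzero term can occur and the $i$-th component $\overline{\rho}_i$ is zero.

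Finally, in the surviving case the equalities in \eqref{eqn:ord} pin down the orders of $\overline{\tau}$: they give $\mathrm{ord}_{P_g'}(\overline{\tau})=\mathrm{ord}_{P_g''}(\overline{\sigma}_i)$ and $\mathrm{ord}_{P_g''}(\overline{\tau})=\mathrm{ord}_{P_g'}(\overline{\sigma}_i)\geq a_i$, which is exactly the divisor of $\iota^*\overline{\sigma}_i$. Since a section of a line bundle on an elliptic curve is determined up to scalar by its divisor, $\overline{\tau}$ is a scalar multiple of $\iota^*\overline{\sigma}_i$, hence lies in $\iota^*L^i$ via \eqref{inc}; thus $\overline{\rho}_i=0$. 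The main obstacle is this third step: converting the forced linear equivalence into vanishing statements requires carefully tracking the torsion versus non-torsion splitting in $\mathrm{Jac}(\widetilde{E}_g)$ and running the bookkeeping through the cases of \eqref{eqn:ib} for both $\mathscr{L}_{\widetilde{E}_g}\otimes\mathscr{O}_{\widetilde{E}_g}$ and $\mathscr{M}_{\widetilde{E}_g}\otimes\mathscr{O}_{\widetilde{E}_g}$ — precisely the point where the $2$-torsion obstruction noted after Lemma \ref{lem:ine} both limits and drives the argument.
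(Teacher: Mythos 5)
Your argument is correct in substance, but it takes a genuinely different route from the paper's. The paper proves the lemma by explicit construction: it builds bases $\{e_n\}$ of $V=H^0(\mathscr{M}_{\widetilde{E}_g}\otimes\mathscr{O}_{\widetilde{E}_g}(a_rP_g'+b_rQ_g'))$ from the divisors \eqref{div} supported on the four nodes (plus auxiliary points $Q_1,R_1$, resp.\ $Q_2,R_2$ in the two cases of \eqref{eqn:ib}), arranged so that the sums of vanishing orders at $\{P_g',Q_g'\}$, and likewise at $\{P_g'',Q_g''\}$, are pairwise distinct; expanding $\overline{\rho}_i=\sum_{h,m}z_{h,m}\upsilon_h\otimes\omega_m$, the property \eqref{eqn:ord} plus this triangularity kills all pairs except those with $c_h+d_m=2g-2$ and the right parities, and for the surviving pairs the explicit divisors show that $\iota^*\upsilon_h$ and $\omega_m$ have equal divisors, hence are proportional, so $\overline{\rho}_i$ lands in $\langle\overline{\sigma}_i\rangle\otimes\mathrm{Im}(\iota^*L^i\hookrightarrow M^i)$. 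You dispense with the bases altogether: your degree count is valid (the twists $\mp(a_iP_g'+b_iQ_g')$ cancel between the two tensor factors, so the upper bound $4g-4$ meets the lower bound coming from \eqref{eqn:ord}), it forces every zero and pole of each factor onto the four nodes and forces equality in both inequalities, and then Abel--Jacobi rigidity on the genus-one curve $\widetilde{E}_g$ replaces the paper's distinct-order-sum bookkeeping. This is shorter, treats the two cases of \eqref{eqn:ib} uniformly (all you use is that both bundles are of the form $\mathscr{O}_{\widetilde{E}_g}$ of a divisor supported on $\{P_g',P_g''\}$), makes explicit the torsion/non-torsion mechanism that the paper leaves implicit in its choice of the $D_n$, and yields the extra information that a nonzero term can only occur for indices with $b_i=0$.

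Two points should be made explicit to complete your plan. First, the non-torsionness of $Q_g'-P_g'$ is not among the facts recorded after Lemma \ref{lem:ine} (only the $2$-torsion statements are); it must be deduced from the construction of the family: $P_g-Q_g$ is non-torsion in $\mathrm{Jac}(E_g)$ by hypothesis, and $\mathrm{Nm}(Q_g'-P_g')=Q_g-P_g$, so a torsion relation upstairs would push down to one downstairs. Second, in your last step the equalities from \eqref{eqn:ord} alone give only $\mathrm{ord}_{P_g'}(\overline{\tau})+\mathrm{ord}_{Q_g'}(\overline{\tau})=\mathrm{ord}_{P_g''}(\overline{\sigma}_i)$ (and the analogous identity at the double-primed points); to identify $\mathrm{div}(\overline{\tau})$ with $\mathrm{div}(\iota^*\overline{\sigma}_i)$ you must also kill the orders of $\overline{\tau}$ at $Q_g'$ and $Q_g''$, by running your Abel--Jacobi step on $\overline{\tau}$ itself. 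This is legitimate precisely because you are in the surviving case $b_i=0$: then $\overline{\tau}$ has no pole at $Q_g'$, its orders at $Q_g'$ and $Q_g''$ are nonnegative, and $\mathscr{M}_{\widetilde{E}_g}\otimes\mathscr{O}_{\widetilde{E}_g}$ is also one of the bundles in \eqref{eqn:ib}. With these two additions the proof is complete.
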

\begin{proof}
Provided the assumption $\mathscr{L}_{\widetilde{E}_g}\otimes\mathscr{O}_{\widetilde{E}}\cong \mathscr{M}_{\widetilde{E}_g}\otimes\mathscr{O}_{\widetilde{E}_g}$ along with (\ref{eqn:LM},~\ref{eqn:lbi}), the spaces $L^i$ and $M^i$ for all $0\leq i\leq r$ are injected into $V:= H^0(\mathscr{M}_{\widetilde{E}_g}\otimes\mathscr{O}_{\widetilde{E}_g}(a_rP_g'+b_rQ_g'))$. To prove this lemma, we find an appropriate basis of $V$. 
\begin{enumerate}[align=right,itemindent=2em,labelsep=2pt,labelwidth=1em,leftmargin=0pt,nosep]
\item We first consider the case where $\mathscr{L}_{\widetilde{E}_g}\otimes\mathscr{O}_{\widetilde{E}_g}\cong\mathscr{M}_{\widetilde{E}_g}\otimes\mathscr{O}_{\widetilde{E}_g}\cong\mathscr{O}_{\widetilde{E}_g}((2g-2)P_g')$. We note that $2Q_g'\equiv2Q_g''$ on $\widetilde{E}_g$, and take points $Q_1,R_1\in \widetilde{E}_g\backslash\{P_g',P_g'',Q_g',Q_g''\}$ such that $Q_1+R_1\equiv Q_g'+Q_g''$. Let us define the divisors 
\begin{equation}\label{div}
\begin{aligned}
D_{2(k+l)}:=&(2g-2+a_r-2k)P_g'+2kP_g''+(b_r-2l)Q_g'+2lQ_g''\\
&\quad\text{for}\;k=0,\ldots,g-1+\left\lfloor \dfrac{a_r}{2}\right\rfloor, l=0,\ldots, \left\lfloor \dfrac{b_r}{2}\right\rfloor\\
D_{2(k+l)+1}:=&(2g-2+a_r-2k)P_g'+2kP_g''+(b_r-3-2l)Q_g'+(2l+1)Q_g''+Q_1+R_1\\
&\quad\text{for}\;k=0,\ldots,g-1+\left\lfloor \dfrac{a_r}{2}\right\rfloor,l=0,\ldots,\left\lfloor\dfrac{b_r-2}{2}\right\rfloor
\end{aligned}
\end{equation}
on $\widetilde{E}_g$. Let $e_n$ be a section in $V$ with divisor $D_n$ for $n\in \{0,\ldots,2g-4+a_r+b_r,2g-2+a_r+b_r\}$. We note that for any integers $a,b$, divisors $D_n'=D_n-aP_g'+aQ_g'-bP_g''+bQ_g''$ is isomorphic to $D_n$. For instance, it implies that $D_{2(1+0)}\cong D_{2(0+1)}$ because $D_{2(1+0)}=D_{2(0+1)}-2P_g'+2Q_g'$. Then we have $2g-2+a_r+b_r$ sections $e_n$ such that the sum of the possible vanishing orders at $P_g'$ and $Q_g'$ (resp. $P_g''$ and $Q_g''$) are all distinct. Thus $\{e_n\}_{n}$ forms a basis of $V$. We write
\[
\overline{\rho}=\bigoplus_{i=0}^r\overline{\rho}_i
\]
where $\overline{\rho}_i\in\langle \overline{\sigma}_i\rangle\otimes M^i$. We take $i$ satisfying $\overline{\rho}_i\neq 0$. By Lemma \ref{lem:ine}, we know that
\begin{align*}
\mathrm{ord}_{P_g'}(\overline{\rho}_i)+\mathrm{ord}_{Q_g'}(\overline{\rho}_i)&\geq 2g-2, \;\text{and}\\
\mathrm{ord}_{P_g''}(\overline{\rho}_i)+\mathrm{ord}_{Q_g''}(\overline{\rho}_i)&\geq 2g-2.
\end{align*}
The basis $\{e_n\}_n$ of $V$ leads to the following bases $\{\upsilon_h\}_h$ of $H^0(\mathscr{L}_{\widetilde{E}_g}\otimes\mathscr{O}_{\widetilde{E}}(-a_iP'_\eta-b_iQ'_\eta))$ and $\{\omega_m\}_m$ of $H^0(\mathscr{M}_{\widetilde{E}_g}\otimes\mathscr{O}_{\widetilde{E}}(a_iP'_\eta+b_iQ'_\eta))$.

We let $c_h=\mathrm{ord}_{P_g'}(\upsilon_h)+\mathrm{ord}_{Q_g'}(\upsilon_h)$. The way of constructing the basis using the divisors presented in \eqref{div} implicates the following equations
\begin{equation}\label{eqn:or1}
\mathrm{ord}_{P_g''}(\upsilon_h)+\mathrm{ord}_{Q_g''}(\upsilon_h)= \begin{cases}
2g-2-a_i-b_i-c_h &\text{if $a_i+b_i+c_h\equiv 0$ mod 2}\\
2g-4-a_i-b_i-c_h&\text{if $a_i+b_i+c_h \equiv 1$ mod 2}.
\end{cases}
\end{equation}
Also, if we assume $d_m=\mathrm{ord}_{P_g'}(\omega_m)+\mathrm{ord}_{Q_g'}(\omega_m)$, then we have
\begin{equation}\label{eqn:or2}
\mathrm{ord}_{P_g''}(\omega_m)+\mathrm{ord}_{Q_g''}(\omega_m)= \begin{cases}
2g-2+a_i+b_i-d_m &\text{if $a_i+b_i+d_m\equiv 0$ mod 2}\\
2g-4+a_i+b_i-d_m&\text{if $a_i+b_i+d_m \equiv 1$ mod 2}.
\end{cases}
\end{equation}

We know that $\langle \overline{\sigma}_i\rangle\subseteq H^0(\mathscr{L}_{\widetilde{E}_g}\otimes\mathscr{O}_{\widetilde{E}}(-a_iP'_\eta-b_iQ'_\eta))$ and $M^i\subseteq H^0(\mathscr{M}_{\widetilde{E}_g}\otimes\mathscr{O}_{\widetilde{E}}(a_iP'_\eta+b_iQ'_\eta))$. So, one can write $\overline{\rho}_i=\sum_{h,m}z_{h,m}\upsilon_h\otimes\omega_m$. As bases $\{\upsilon_h\}_h$ and $\{\omega_m\}_m$ have distinct sum of vanishing orders at $P_g'$ and $Q_g$ respectively, we have $z_{h,m}=0$ for all $h, m$ such that 
\begin{align*}
\mathrm{ord}_{P_g'}(\upsilon_h)+\mathrm{ord}_{P_g'}(\omega_m)+\mathrm{ord}_{Q_g'}(\upsilon_h)+\mathrm{ord}_{Q_g'}(\omega_m)&<2g-2,\;\text{and}\\
\mathrm{ord}_{P_g''}(\upsilon_h)+\mathrm{ord}_{P_g''}(\omega_m)+\mathrm{ord}_{Q_g''}(\upsilon_h)+\mathrm{ord}_{Q_g''}(\omega_m)&<2g-2.
\end{align*}
In fact, the conditions $\mathrm{ord}_{P_g'}(\upsilon_h\otimes\omega_m)+\mathrm{ord}_{Q_g'}(\upsilon_h\otimes\omega_m)\geq 2g-2$ and $\mathrm{ord}_{P_g''}(\upsilon_h\otimes\omega_m)+\mathrm{ord}_{Q_g''}(\upsilon_h\otimes\omega_m)\geq 2g-2$ happen when $c_h+d_m\geq 2g-2$ as well as
\begin{equation}\label{eqn:or3}
c_h+d_m=2g-2\quad\text{and}\quad a_i+b_i+c_h\equiv a_i+b_i+d_m\equiv 0\;\text{mod $2$}.
\end{equation}
Then we have
\begin{align*}
\mathrm{div}(\upsilon_h)&=u_1P_g'+(d_m-a_i-u_2)P_g'' +(c_h-u_1)Q_g'+(u_2-b_i)Q_g''\;\text{and}\\
\mathrm{div}(\omega_m)&=(d_m-u_2)P_g'+(a_i+u_1)P_g''+u_2Q_g'+(c_h+b_i-u_1)Q_g''
\end{align*}
for some nonnegative integers $u_1$ and $u_2$, where $0\leq u_1\leq c_h$ and $b_i\leq u_2\leq d_m-a_i$.
With these setups, we can conclude that the image of $\iota^*\upsilon_h$ under the composition of the following inclusions
\begin{align*}
\iota^*H^0\left(\mathscr{L}_{\widetilde{E}_g}\otimes\mathscr{O}_{\widetilde{E}_g}(-a_iP_g'-b_iQ_g')\right)\hookrightarrow\iota^*H^0\left(\mathscr{L}_{\widetilde{E}_g}\otimes\mathscr{O}_{\widetilde{E}_g}\right)\xrightarrow{\cong} H^0\left(\mathscr{M}_{\widetilde{E}_g}\otimes\mathscr{O}_{\widetilde{E}_g}\right)\\
\hookrightarrow H^0\left(\mathcal{M}_{\widetilde{E}_g}(a_iP_g'+b_iQ_g')\right)
\end{align*}
is contained in $\langle \omega_m\rangle$. This implies $\overline{\rho}_i$ in $\langle \overline{\sigma}_i\rangle\otimes\mathrm{Im}\left(\iota^*L^i\hookrightarrow M^i\right)$. Thus, it must be that $\overline{\rho}$ vanishes in $\bigoplus_{i=0}^r\langle\overline{\sigma}_i\rangle\otimes M^i/\iota^*L^i$, as desired.
\item Next, we take the case where $\mathscr{L}_{\widetilde{E}_g}\otimes\mathscr{O}_{\widetilde{E}_g}\cong\mathscr{M}_{\widetilde{E}_g}\otimes\mathscr{O}_{\widetilde{E}_g}\cong\mathscr{O}_{\widetilde{E}_g}((2g-3)P_g'+P_g'')$. Similarly, we have that $2Q_g'\equiv2Q_g''$ on $\widetilde{E}_g$ such that we have $Q_2,R_2\in \widetilde{E}_g\backslash\{P_g',P_g'',Q_g',Q_g''\}$ such that $Q_2+R_2\equiv P_g'+P_g''$.
We define divisors
\begin{equation}
\begin{aligned}
D_{2(k+l)+1}:=&(2g-3+a_r-2k)P_g'+(2k+1)P_g''+(b_r-2l)Q_g'+(2l+1)Q_g''\\
&\quad\text{for}\;k=0,\ldots,g+\left\lfloor \dfrac{a_r-3}{2}\right\rfloor,l=0,\ldots,\left\lfloor\dfrac{b_r}{2}\right\rfloor\\
D_{2(k+l)}:=&(2g-4+a_r-2k)P_g'+2kP_g''+(b_r-2l)Q_g'+2lQ_g''+Q_2+R_2\\
&\quad\text{for}\;k=0,\ldots,g-2+\left\lfloor \dfrac{a_r}{2}\right\rfloor, l=0,\ldots, \left\lfloor \dfrac{b_r}{2}\right\rfloor
\end{aligned}
\end{equation}
on $\widetilde{E}_g$. We follow the same argument as before, but using 
\begin{equation}
\mathrm{ord}_{P_g''}(\upsilon_h)+\mathrm{ord}_{Q_g''}(\upsilon_h)= \begin{cases}
2g-2-a_i-b_i-c_h &\text{if $a_i+b_i+c_h\equiv 1$ mod 2}\\
2g-4-a_i-b_i-c_h&\text{if $a_i+b_i+c_h \equiv 0$ mod 2}
\end{cases}
\end{equation}
rather than \eqref{eqn:or1}, and
\begin{equation}
\mathrm{ord}_{P_g''}(\omega_m)+\mathrm{ord}_{Q_g''}(\omega_m)= \begin{cases}
2g-2+a_i+b_i-d_m &\text{if $a_i+b_i+d_m\equiv 1$ mod 2}\\
2g-4+a_i+b_i-d_m&\text{if $a_i+b_i+d_m \equiv 0$ mod 2}
\end{cases}
\end{equation}
in place of \eqref{eqn:or2}. These modifications gives
\begin{equation*}
c_h+d_m=2g-2\quad\text{and}\quad a_i+b_i+c_h\equiv a_i+b_i+d_m\equiv 1\;\text{mod $2$}.
\end{equation*}
instead of \eqref{eqn:or3}. Then the rest of the arguments works as before. 
\end{enumerate}
\end{proof}

\subsection{Proof of Main result}

With the groundwork set from previous sections, we can now proceed to prove Theorem \ref{thm:main2}.

\begin{theorem}\label{thm:main2}
The quadruple $(\mathscr{C}_\eta,\epsilon_\eta,P_\eta,Q_\eta)$ satisfies the coupled Prym-Petri condition. 
\end{theorem}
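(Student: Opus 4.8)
The plan is to establish the coupled Prym-Petri condition for $(\mathscr{C}_\eta,\epsilon_\eta,P_\eta,Q_\eta)$ by showing that the coupled Prym-Petri map $\overline{\mu}_\eta$ from \eqref{eqn:varmu} is injective, and I would argue by contradiction. Suppose $\ker(\overline{\mu}_\eta)\neq 0$ and fix a nonzero element $\rho\in\ker(\overline{\mu}_\eta)$. The entire strategy is to specialize $\rho$ to the distinguished component $\widetilde{E}_g$ of the special fiber and then play the order estimates of Lemma \ref{lem:ine} against the vanishing statement of Lemma \ref{lem:M}.

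First I would clear denominators using the integral structure set up in \S\ref{sec:5.1}--\S\ref{sec:5.2}. Multiplying $\rho$ by the unique power $t^\gamma$ so that $t^\gamma\rho$ lies in $\bigoplus_i\langle\widehat{\sigma}_i\rangle\otimes\widetilde{\phi}_*\mathscr{M}_{\widetilde{E}_g}^i$ but not in $t$ times this module produces a restriction $\overline{\rho}$ to $\widetilde{E}_g$, and by \eqref{eqn:neq} we have $\overline{\rho}\neq 0$ in $\bigoplus_i\langle\overline{\sigma}_i\rangle\otimes M^i/\iota^*L^i$. Since $\rho$ is in the kernel of $\overline{\mu}_\eta$, the cleared element $t^\gamma\rho$ lies in the kernel of the extended map $\overline{\mu}$ over $T$, so $\overline{\rho}$ inherits the kernel property on the special fiber. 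I would then apply Lemma \ref{lem:ine} to conclude that $\overline{\rho}$ satisfies the bounds \eqref{eqn:ord}, namely $\mathrm{ord}_{P_g'}(\overline{\rho})+\mathrm{ord}_{Q_g'}(\overline{\rho})\geq 2g-2$ and $\mathrm{ord}_{P_g''}(\overline{\rho})+\mathrm{ord}_{Q_g''}(\overline{\rho})\geq 2g-2$. These are exactly the hypotheses of Lemma \ref{lem:M}, which forces $\overline{\rho}=0$, contradicting $\overline{\rho}\neq 0$. Hence $\ker(\overline{\mu}_\eta)=0$, so $\overline{\mu}_\eta$ is injective and the coupled Prym-Petri condition holds.

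Because Lemmas \ref{lem:ine} and \ref{lem:M} already absorb the two genuinely hard computations---the inductive propagation of the order estimate up the chain $E_1',\ldots,\widetilde{E}_g$ via limit linear series, and the basis construction on $\widetilde{E}_g$ that distinguishes the two possible limit line bundles in \eqref{eqn:ib}---the main obstacle in the theorem itself is the degeneration bookkeeping that glues them together. Specifically, I expect the delicate point to be verifying that $t^\gamma\rho$ really remains in the kernel of the \emph{integral} extended map and that the limit $\overline{\rho}$ is taken with respect to the correct integral structure, so that the order hypotheses feeding Lemma \ref{lem:M} apply verbatim. This is sharpened by the 2-torsion subtlety noted after Lemma \ref{lem:ine}: since $P_g'-P_g''$ and $Q_g'-Q_g''$ are only $2$-torsion in $\mathrm{Jac}(\widetilde{E}_g)$, the bound \eqref{eqn:ord} cannot be improved beyond $2g-2$, which is precisely why Lemma \ref{lem:M} must be established at the threshold $2g-2$ rather than strictly above it. Matching these two bounds exactly is the crux of the assembly, and everything else reduces to the contradiction argument outlined above.
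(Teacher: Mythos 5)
Your proposal is correct and follows essentially the same route as the paper's own proof: a contradiction argument in which a nonzero kernel element $\rho$ is scaled by $t^\gamma$ to produce $\overline{\rho}\neq 0$ via \eqref{eqn:neq}, after which Lemma \ref{lem:ine} supplies the order bounds \eqref{eqn:ord} that Lemma \ref{lem:M} shows force $\overline{\rho}=0$. The paper's proof is just a terser version of exactly this assembly, with the integral-structure bookkeeping and the $2$-torsion threshold discussion already delegated to \S\ref{sec:5.1}--\S\ref{sec:5.3} as you anticipated.
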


\begin{proof}
 $\mathscr{L}_\eta\in V_{\mathbf{a},\mathbf{b}}^r(\mathscr{C}_\eta, P_\eta,Q_\eta)$ such that $h^0(\widetilde{C}_\eta,\mathscr{L}_\eta(-a_iP_\eta-b_iQ_\eta))=r+1-i$ for each $i$. Let $\rho$ be an element of the kernel of the coupled Prym-Petri map $\overline{\mu}_\eta$ \eqref{eqn:varmu}. Suppose $\rho\neq 0$. Then $\overline{\rho}\neq 0$ by \eqref{eqn:neq}. However, this gives a contradiction for Lemma \ref{lem:M} with Lemma \ref{lem:ine}. Hence, we obtain $\rho=0$.
\end{proof}

The following is our main result, the {\it coupled Prym-Petri Theorem}.

\begin{theorem}\label{thm:main}
Let $C$ be a general curve of genus $g$ and $\epsilon$ an arbitrary non-trivial $2$-torsion point in the Jacobian $\mathrm{Jac}(C)$. For general points $P$ and $Q$ in the \'{e}tale double cover $\widetilde{C}$, the quadruple $(C,\epsilon,P,Q)$ satisfies the coupled Prym-Petri condition.
\end{theorem}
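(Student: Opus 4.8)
The plan is to deduce Theorem \ref{thm:main} from Theorem \ref{thm:main2} by a specialization-and-openness argument, so that all of the analytic and combinatorial content (Lemmas \ref{lem:ord}, \ref{lem:ine}, \ref{lem:M}) is already packaged into the statement about the geometric generic fiber of the family $\phi\colon\mathscr C\to T$ built in \S\ref{sec5}. First I would assemble a parameter space $B$ over $\mathbb K$ for quadruples $(C,\epsilon,P,Q)$: take a versal base (or suitable étale cover of the moduli stack) $\mathcal R_g$ of pairs $(C,\epsilon)$, and form the twofold fibered product of the universal double cover $\widetilde{\mathcal C}\to\mathcal R_g$ with itself to record $P,Q\in\widetilde C$. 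Since $\mathcal R_g$ is irreducible and the geometric fibers of $\widetilde{\mathcal C}\to\mathcal R_g$ are irreducible (because $\epsilon$ is nontrivial), the space $B$ is irreducible over $\mathbb K$. Over $B$ one has the relative Prym variety $\mathscr P^{\pm}\to B$, the relative isotropic flags of \S\ref{sec3}, the relative two-pointed Prym--Brill--Noether scheme $\mathcal V\to B$, and fiberwise the coupled Prym--Petri map $\overline\mu$ of \S\ref{sec4}.

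Next I would make precise that the coupled Prym--Petri condition is open on $B$. Let $U\subseteq B$ be the locus of quadruples for which $\overline\mu$ is injective at every $L$ of the open stratum $W_{2g-2}^{\tau}(\widetilde C,P,Q)^\circ$. The cleanest way to see $U$ is open is to route through dimension rather than to quantify injectivity directly over a non-proper stratum: by Proposition \ref{prop:4.2} injectivity of $\overline\mu$ at such an $L$ forces $\dim_L\mathcal V_b=g-1-|\mathbf a|-|\mathbf b|$, the expected value, while the reverse inequality always holds by Theorem \ref{thm:ck}, and Proposition \ref{prop:4.4} then supplies smoothness. Since $\mathcal V\to B$ is proper (it is closed in the projective relative Prym variety) and determinantal, fiber dimension is upper semicontinuous, so the locus $B_{>}\subseteq B$ where a component meeting the open stratum exceeds the expected dimension is closed, and $U$ is (the relevant part of) its complement.

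Then I would invoke Theorem \ref{thm:main2}. The discrete valuation ring $\mathcal O$ of \S\ref{sec5} may be taken to be a $\mathbb K$-algebra, so the geometric generic fiber $(\mathscr C_\eta,\epsilon_\eta,P_\eta,Q_\eta)$ over $\overline K$ determines a $\overline K$-point of $B$, which by Theorem \ref{thm:main2} lies in $U$. Hence $U_{\overline K}\neq\emptyset$, so $U\neq\emptyset$, and by irreducibility $U$ is dense open in $B$; as $\mathbb K=\overline{\mathbb K}$, the set $U(\mathbb K)$ is dense. This already yields the condition for a general quadruple $(C,\epsilon,P,Q)$. To upgrade ``general $(C,\epsilon)$'' to ``general $C$ with \emph{arbitrary} $\epsilon$'', I would use finiteness of $\mathcal R_g\to\mathcal M_g$ together with a relative-dimension count: the fiber of $B\setminus U$ over a generic $(C,\epsilon)$ cannot fill the whole surface $\widetilde C\times\widetilde C$, so the locus $S\subseteq\mathcal R_g$ of pairs whose full fiber lies in $B\setminus U$ is a proper closed subset, and its image in $\mathcal M_g$ under the finite map is again proper closed; for $C$ outside that image, every $\epsilon$ and general $P,Q$ land in $U$. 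The hypothesis $P-Q$ nontorsion used throughout \S\ref{sec3} is automatic for general $P,Q$ and is inherited from the nontorsion choices in \S\ref{sec5}.

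The main obstacle is precisely the openness step, namely converting the quantifier ``for all $L$ in the open stratum'' into a genuinely open condition on $B$: the stratum is not proper, so one cannot simply push forward the non-injectivity locus of $\overline\mu$. The fix above---replacing injectivity of $\overline\mu$ by the equivalent expected-dimensionality of the proper determinantal family $\mathcal V$ via Propositions \ref{prop:4.2} and \ref{prop:4.4} and the lower bound of Theorem \ref{thm:ck}---should circumvent it, but it requires checking that $\overline\mu$ globalizes to a morphism of bundles over the relative stratum and that its fiberwise kernel computes the excess dimension uniformly in families, matching the pointwise tangent-space identification \eqref{eqn:tan}. A secondary point to verify is that the entire construction of \S\ref{sec5} can be performed over a $\mathbb K$-algebra discrete valuation ring, so that the resulting $\overline K$-point of $B$ is legitimate.
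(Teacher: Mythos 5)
Your overall skeleton matches the paper's proof: openness of the coupled Prym--Petri condition in families, plus irreducibility of the moduli of quadruples $(C,\epsilon,P,Q)$ with $\epsilon\neq 0$ (this is where \cite[Lemma 3.2]{Tar} enters), reduces the theorem to exhibiting a single quadruple satisfying the condition, which Theorem \ref{thm:main2} provides via the degeneration of \S\ref{sec5}; your upgrade from ``general $(C,\epsilon)$'' to ``general $C$, arbitrary $\epsilon$'' via finiteness of $\mathcal{R}_g\to\mathcal{M}_g$ is a legitimate fleshing-out of what the paper leaves implicit. The genuine gap is your justification of openness. You define $U$ as the locus where $\overline{\mu}$ is injective at every $L$ in the open stratum, and claim $U$ is open because it coincides with (the relevant part of) the complement of the excess-dimension locus $B_{>}$. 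Only one inclusion holds: Proposition \ref{prop:4.2} together with the lower bound of Theorem \ref{thm:ck} gives $U\subseteq B_{>}^{\,c}$. The converse is false. The content of \S\ref{sec4} is the equality $\dim T_L\bigl(V_{\mathbf{a},\mathbf{b}}^r(C,\epsilon,P,Q)^\circ\bigr)=g-1-|\mathbf{a}|-|\mathbf{b}|+\dim\ker(\overline{\mu})$, so a fiber can have exactly the expected dimension at $L$ while $\ker(\overline{\mu})\neq 0$; the locus is then merely singular at $L$. Expected dimension is the Brill--Noether-general condition, not the Petri-general condition, and they are not equivalent. Consequently openness of $B_{>}^{\,c}$ says nothing about openness of $U$, and your route-through-dimension would at best prove the dimension statement of Theorem \ref{main2}, not the injectivity statement of Theorem \ref{thm:main}. (Moreover $B_{>}$ itself is not obviously closed: ``a component meeting the open stratum'' is not preserved under specialization, since the open stratum can shrink in the limit, so upper semicontinuity of total fiber dimension does not apply as stated --- this is the same stratification problem you flagged, reappearing one level down.) The standard resolution, which the paper asserts rather than proves (following Welters and Tarasca), works upstairs in the proper relative Prym variety and shows the non-injectivity locus is closed there, via a limiting/semicontinuity argument for kernel elements of the multiplication map that also tracks what happens when $h^0$ jumps; that missing argument is exactly what your proposal needed to supply and did not.

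A secondary inaccuracy: you treat Theorem \ref{thm:main2} as directly producing a $\overline{K}$-point of $B$ lying in $U$, but Theorem \ref{thm:main2} concerns the generic fiber $(\mathscr{C}_\eta,\epsilon_\eta,P_\eta,Q_\eta)$ over the non-algebraically-closed field $k(\eta)$, whereas the point of the moduli space you need corresponds to the \emph{geometric} generic fiber. The paper bridges this by observing that any line bundle on $\mathscr{C}_{\overline{\eta}}$ is defined over a finite extension of $k(\eta)$, and that the family construction of \S\ref{sec5} is stable under such base extensions, so that Theorem \ref{thm:main2} applies after finite base change; some such descent step should be included for your $\overline{K}$-point to be ``legitimate'' in the sense you intend.
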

\begin{proof}
Since it is an open condition on families of quadruple $(C,\epsilon, P,Q)$ to satisfy the coupled Prym-Petri condition, the statement can be proved by the existence of a quadruple $(C,\epsilon, P, Q)$ with $\epsilon\neq0$ where the coupled Prym-Petri condition is satisfied. This is based on the fact that the moduli space of quadruple $(C,\epsilon, P,Q)$ with a fixed genus and $\epsilon\neq 0$ is irreducible, as in \cite[Lemma 3.2]{Tar}.

So the statement can be established by proving that the geometric generic fiber satisfies the coupled Prym-Petri condition. Let us consider the geometric generic fiber $(\mathscr{C}_{\overline{\eta}},\epsilon_{\overline{\eta}}, P_{\overline{\eta}},Q_{\overline{\eta}})$, where $\mathscr{C}_{\overline{\eta}}:=\mathscr{C}_\eta\otimes\overline{k(\eta)}$, $\epsilon_{\overline{\eta}}:=\epsilon_\eta\otimes\overline{k(\eta)}$, and $P_{\overline{\eta}}$, $Q_{\overline{\eta}}$ are the points in $\widetilde{\mathscr{C}}_{\overline{\eta}}:=\widetilde{\mathscr{C}}_{\overline{\eta}}\otimes\overline{k(\eta)}$ induced by $P$ and $Q$ respectively. In fact, it is evident from \cite[p. 272]{EH83} that any line bundle on $\mathscr{C}_{\overline{\eta}}$ is constructed from a line bundle over some finite extension of $k(\eta)$. Henceforth, Theorem \ref{thm:main2} concludes the statement by finite base change and change of notation.
\end{proof}

Combined with Proposition \ref{prop:4.4} and Theorem \ref{thm:main}, we complete our second main Theorem \ref{main:2}.

As a Corollary \ref{cor}, we have the condition on the non-emptiness of the pointed Prym-Brill-Noether loci $V_{\mathbf{a},\mathbf{b}}^r(C,P,Q)$.

In Corollary \ref{cor}, if $g-1=|\mathbf{a}+\mathbf{b}|$, then $V_{\mathbf{a},\mathbf{b}}^r(C,\epsilon,P,Q)$ associated to a general curve $C$ with general points $P, Q$ is given by a finite number of distinct points. This number can be expressed by the degree of the class of $V_{\mathbf{a},\mathbf{b}}^r(C,\epsilon,P,Q)$. By the Poincar\'{e} formula, $\mathrm{deg}(\xi^{g-1})=(g-1)!=|\mathbf{a}+\mathbf{b}|!$ so that the degree of the class turns out to be 
\begin{equation}\label{eqn:class2}
\mathrm{deg}\left(\left[V_{\mathbf{a},\mathbf{b}}^r(C,\epsilon,P',Q')\right]\right)={|a+b|!}{2^{|\mathbf{a}+\mathbf{b}|-\ell_\circ}}\prod_{i=0}^{r}\dfrac{1}{{(a_i+b_i)!}}\prod_{j<i}\dfrac{a_i+b_i-a_j-b_j}{a_i+b_i+a_j+b_j}.
\end{equation}

Let $\varsigma=(\varsigma_1>\ldots>\varsigma_\ell)$ be a strict partition such that $\varsigma_\ell>0$. The strict partition is corresponding to a shifted shape consisting of $\varsigma_i$ boxes in $i$-th row where each $i$-th row is shifted $i$ steps to the right. The {\it standard Young tableau (SYT) of shifted shape} $\varsigma$ is a filling of $\varsigma$ using $1,\ldots,|\varsigma|$ such that the entries in each row and column are strictly increasing. For instance, if $\varsigma=(4,2,1)$, all the possibilities of SYT of $\varsigma$ are
\[{\small
{ \Tableau{
1&2&4&7\\
~&3&5&~\\
~&~&6\\
}}\quad{ \Tableau{
1&2&4&6\\
~&3&5&~\\
~&~&7\\
}}\quad{ \Tableau{
1&2&4&5\\
~&3&6&~\\
~&~&7\\
}}\quad{ \Tableau{
1&2&3&7\\
~&4&5&~\\
~&~&6\\
}}
\quad{ \Tableau{
1&2&3&6\\
~&4&5&~\\
~&~&7\\
}}
\quad{ \Tableau{
1&2&3&5\\
~&4&6&~\\
~&~&7\\
}}
\quad{ \Tableau{
1&2&3&4\\
~&5&6&~\\
~&~&7\\
}}}.
\]

As in \cite[pg. 3]{Tar}, the degree \eqref{eqn:class2} can be obtained by 
\[
2^{|\mathbf{a}+\mathbf{b}|-\ell_\circ}\#\{\text{SYT of Shifted shape}\;(a_r+b_r,\ldots,a_0+b_0)\}.
\]
In addition, if $a_0+b_0=0$, we use the strict partition $(a_r+b_r>\ldots>a_1+b_1)$ with $\ell_0=r$ for the degree of the class.

\bibliographystyle{amsplain}

\begin{bibdiv}
\begin{biblist}

\bib{And19}{article}{
   author={Anderson, David},
   title={$K$-theoretic Chern class formulas for vexillary degeneracy loci},
   journal={Adv. Math.},
   volume={350},
   date={2019},
   pages={440--485},
   issn={0001-8708},
}

\bib{ACT}{article}{
   author={Anderson, Dave},
   author={Chen, Linda},
   author={Tarasca, Nicola},
   title={$K$-classes of Brill-Noether loci and a determinantal formula},
   journal={Int. Math. Res. Not. IMRN},
   date={2022},
   number={16},
   pages={12653--12698},
}

\bib{AF20}{article}{
   author={Anderson, David},
   author={Fulton, William},
   title={Vexillary signed permutations revisited},
   journal={Algebr. Comb.},
   volume={3},
   date={2020},
   number={5},
   pages={1041--1057},
}

\bib{AIJK}{article}{
   author={Anderson, David},
   author={Ikeda, Takeshi},
   author={Jeon, Minyoung},
   author={Kawago, Ryotaro},
   title={The multiplicity of a singularity in a vexillary Schubert variety},
   journal={Adv. Math.},
   volume={435},
   date={2023},
   pages={Paper No. 109366, 39},
}
  
  \bib{ACGH}{book}{
   author={Arbarello, E.},
   author={Cornalba, M.},
   author={Griffiths, P. A.},
   author={Harris, J.},
   title={Geometry of algebraic curves. Vol. I},
   series={Grundlehren der mathematischen Wissenschaften [Fundamental
   Principles of Mathematical Sciences]},
   volume={267},
   publisher={Springer-Verlag, New York},
   date={1985},
   pages={xvi+386},
}

\bib{Cai}{article}{
   author={Cai, Shuang},
   title={Algebraic connective $K$-theory and the niveau filtration},
   journal={J. Pure Appl. Algebra},
   volume={212},
   date={2008},
   number={7},
   pages={1695--1715},
}

\bib{CHT}{article}{
   author={Ciliberto, Ciro},
   author={Harris, Joe},
   author={Teixidor i Bigas, Montserrat},
   title={On the endomorphisms of ${\rm Jac}(W^1_d(C))$ when $\rho=1$ and
   $C$ has general moduli},
   conference={
      title={Classification of irregular varieties},
      address={Trento},
      date={1990},
   },
   book={
      series={Lecture Notes in Math.},
      volume={1515},
      publisher={Springer, Berlin},
   },
   date={1992},
   pages={41--67},
}

\bib{DL}{article}{
   author={Dai, Shouxin},
   author={Levine, Marc},
   title={Connective algebraic $K$-theory},
   journal={J. K-Theory},
   volume={13},
   date={2014},
   number={1},
   pages={9--56},
}

\bib{DCP}{article}{
   author={De Concini, Corrado},
   author={Pragacz, Piotr},
   title={On the class of Brill-Noether loci for Prym varieties},
   journal={Math. Ann.},
   volume={302},
   date={1995},
   number={4},
   pages={687--697},
}
\bib{EH83}{article}{
   author={Eisenbud, D.},
   author={Harris, J.},
   title={A simpler proof of the Gieseker-Petri theorem on special divisors},
   journal={Invent. Math.},
   volume={74},
   date={1983},
   number={2},
   pages={269--280},
}
\bib{Ful92}{article}{
   author={Fulton, William},
   title={Flags, Schubert polynomials, degeneracy loci, and determinantal
   formulas},
   journal={Duke Math. J.},
   volume={65},
   date={1992},
   number={3},
   pages={381--420},
}

\bib{HIMN}{article}{
   author={Hudson, Thomas},
   author={Ikeda, Takeshi},
   author={Matsumura, Tomoo},
   author={Naruse, Hiroshi},
   title={Degeneracy loci classes in $K$-theory---determinantal and Pfaffian
   formula},
   journal={Adv. Math.},
   volume={320},
   date={2017},
   pages={115--156},
}

\bib{HIMN20}{article}{
   author={Hudson, Thomas},
   author={Ikeda, Takeshi},
   author={Matsumura, Tomoo},
   author={Naruse, Hiroshi},
   title={Double Grothendieck polynomials for symplectic and odd orthogonal
   Grassmannians},
   journal={J. Algebra},
   volume={546},
   date={2020},
   pages={294--314},
}

\bib{Jeon}{article}{
   author={Jeon, Minyoung},
   title={Euler characteristics of Brill-Noether loci on Prym varieties},
   journal={Manuscripta Math.},
   volume={173},
   date={2024},
   number={1-2},
   pages={753--769},
}

\bib{Mum}{article}{
   author={Mumford, David},
   title={Prym varieties. I},
   conference={
      title={Contributions to analysis (a collection of papers dedicated to
      Lipman Bers)},
   },
   book={
      publisher={Academic Press, New York},
   },
   date={1974},
   pages={325--350},
}

\bib{Mum2}{article}{
   author={Mumford, David},
   title={Theta characteristics of an algebraic curve},
   journal={Ann. Sci. \'{E}cole Norm. Sup. (4)},
   volume={4},
   date={1971},
   pages={181--192},
}

 \bib{Pflu}{article}{
author={Nathan Pflueger},
      title={Versality of Brill-Noether flags and degeneracy loci of twice-marked curves}, 
      journal={To appear in Algebraic Geometry, arXiv:2103.10969},
        date={2021},
 }

 \bib{Ra}{article}{
   author={Ramanathan, A.},
   title={Schubert varieties are arithmetically Cohen-Macaulay},
   journal={Invent. Math.},
   volume={80},
   date={1985},
   number={2},
   pages={283--294},
}

\bib{Sho}{article}{
   author={Shokurov, V. V.},
   title={Prym varieties: theory and applications},
   language={Russian},
   journal={Izv. Akad. Nauk SSSR Ser. Mat.},
   volume={47},
   date={1983},
   number={4},
   pages={785--855},
}

 \bib{Tar}{article}{
   author={Tarasca, Nicola},
   title={A pointed Prym-Petri theorem},
   journal={Trans. Amer. Math. Soc.},
   volume={376},
   date={2023},
   number={4},
   pages={2641--2656},
}

\bib{Wel85}{article}{
   author={Welters, Gerald E.},
   title={A theorem of Gieseker-Petri type for Prym varieties},
   journal={Ann. Sci. \'{E}cole Norm. Sup. (4)},
   volume={18},
   date={1985},
   number={4},
   pages={671--683},
}

\end{biblist}
\end{bibdiv}


\end{document}